\apptocmd{\sloppy}{\hbadness 10000\relax}{}{}
    \newtheorem{theorem}{Theorem}[section]
    \newtheorem{lemma}[theorem]{Lemma}
    \newtheorem{proposition}[theorem]{Proposition}
    \newtheorem{remark}[theorem]{Remark}
    \theoremstyle{definition}
    \newtheorem{definition}[theorem]{Definition}
\newcommand{\PS}[1]{\llbracket #1 \rrbracket}
\newcommand{\LS}[1]{(\!( #1 )\!)}
\def\Q{\mathbb{Q}}
\def\F{\mathbb{F}}
\def\TF{\mathscr{F}}
\def\CN{\mathscr{N}}
\def\CP{\mathbb{CP}}
\def\Gal{{\rm Gal}}
\def\End{{\rm End}}
\def\Hom{{\rm Hom}}
\def\Spf{{\rm Spf}}
\def\MU{{\rm MU}}
\def\m{\mathfrak{m}}
\def\O{\mathcal{O}}
\begin{document}
    \title{A proof for Ando's theorem on norm-coherent coordinates via the Coleman norm operator}
    \author{Hongxiang Zhao}
    \date{}
    
    \maketitle
    
    \begin{abstract}
        Ando established an algebraic criterion for when a complex orientation for a Morava E-theory is an $H_\infty$-map. The criterion relates such an orientation to a specific property of the formal group associated to the E-theory, namely, a norm coherence condition on its coordinate. On the other hand, Coleman constructed a norm operator for interpolating division values in local fields, which depends on a Lubin--Tate formal group law. These formal group laws are important tools in explicit local class field theory. \par 
        In this article, we give a conceptual proof for Ando's theorem using the Coleman norm operator via the bridge of formal group laws between topology and arithmetic. 
    \end{abstract} 
    
    \tableofcontents

    \newpage

    \section{Introduction}
    The purpose of this article is to supply a number-theoretic, conceptual proof for an old, topological theorem of Matthew Ando's from his study of structured ring spectra and their multiplicative operations. We follow a suggestion by Charles Rezk and Yifei Zhu (cf. \cite[Remark 1.3]{Zhu20}). 
    \begin{itemize}
        \item We identify Ando's algebraic criterion for $H_\infty$ complex orientations of Morava E-theories as a coherence condition for certain division values in local fields with respect to a Coleman norm operator. 
        \item We then coherently construct these division values by an infinite iteration of the operator. 
    \end{itemize} 
    In a concrete way, the ingredients going into the proof hint towards a higher-algebraic theory for class fields (c.f., e.g., \cite{BSY22}). \par 
    Throughout, $k$ denotes a perfect field of characteristic $p > 0$, $R$ denotes a complete local ring with maximal ideal $\m$ and whose residue field $R/\m$ contains $k$. Given a formal group law $F \in R\PS{X,Y}$, we will write $X +_F Y := F(X,Y)$. \par 
    Morava E-theories $E_n$ are complex oriented cohomology theories in stable homotopy theory, whose coefficient ring $(E_n)_*$ in degree $0$ classifies deformations of a $1$-dimensional formal group law $G$ of height $n$ over $k$ to $R$. Each E-theory is represented by an $E_\infty$-spectrum \cite[Corollary 7.6]{GH04} and carries power operations associated to this multiplicative structure. Let $\MU$ be the complex cobordism theory. It is well-known that $\MU$ also admits power operations (\cite{TD68} and \cite[\S IV.2]{May77}). A complex orientation on $E_n$ is a map $\MU \to E_n$ of homotopy commutative ring spectra. Upon taking the connective cover $\MU\langle 0 \rangle$, this is equivalent to a coordinate (i.e., local uniformizer) of the universal deformation formal group associated with $E_n$, which in turn determines a formal group law \cite[Section 2]{Zhu20}. In the case that $G$ is a Honda formal group law over $\F_p$, Ando gave a criterion for when power operations on $\MU$ and $E_n$ are compatible along the map $\MU \to E_n$, i.e., when this map is $H_\infty$ \cite[Theorem 5]{And95}. The criterion is formulated in terms of the formal group law $F$ of the universal deformation associated to the coordinate. 
    \begin{theorem}[{{\cite[Theorem 4]{And95}}}] \label{Ando Theorem 4}
        Suppose $k = \F_p$ and $G$ is the Honda formal group law of height $n$ over $k$, so that $[p]_G(T) = T^{p^n}$, where $[p]_G$ is the $p$-series of $G$. Then in each $\star$-isomorphism class of lifts of $G$ to $\pi_0(E_n) \cong W(k)\PS{u_1,\cdots,u_{n-1}}$, there is a unique lift $F$ such that 
        \begin{equation*}
            [p]_F(T) = \prod_{\lambda} (T +_F \lambda)
        \end{equation*}
        where the product runs over all roots $\lambda$ of $[p]_F$. 
    \end{theorem}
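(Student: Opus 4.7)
The plan is to translate the factorization condition on $[p]_F$ into a fixed-point equation for a Coleman-type norm operator, and then produce the desired lift $F$ by iterating this operator.

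Associated to any lift $F$ of $G$ is a Coleman norm operator $\CN_F$ on the power series ring $\pi_0(E_n)\PS{T}$, defined so that $\CN_F(f)$ is the unique power series $g$ satisfying $g\bigl([p]_F(T)\bigr) = \prod_\lambda f(T +_F \lambda)$, the product running over the roots $\lambda$ of $[p]_F$. Existence and uniqueness of $\CN_F(f)$ follow because the right-hand side is symmetric in the $\lambda$'s and therefore descends along the finite map $[p]_F$. With this reformulation, the conclusion $[p]_F(T) = \prod_\lambda (T +_F \lambda)$ is precisely the assertion that the coordinate $f(T) = T$ (the identity power series) is a fixed point of $\CN_F$.

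For existence, I would start with an arbitrary lift $F_0$ in the given $\star$-isomorphism class and iteratively refine it. Setting $\phi_0 = T$ and $\phi_{k+1} = \CN_{F_k}(\phi_k)$---where $F_k$ denotes the formal group law obtained from $F_0$ by the accumulated change of coordinate through step $k$---I expect the sequence to converge $\m$-adically. The key input is that modulo $\m$ one has $[p]_F(T) \equiv T^{p^n}$ with only the trivial root, so $\CN_F$ reduces to the identity modulo $\m$; this should yield a contraction estimate ensuring that successive iterates agree to higher and higher order in $\m$. The limit $\phi_\infty$ differs from $T$ only by terms in $\m$, so the resulting lift $F_\infty$ remains $\star$-isomorphic to $F_0$ and, by construction, satisfies the desired factorization.

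For uniqueness, if $F$ and $F'$ are two norm-coherent lifts in the same $\star$-isomorphism class, they are related by a coordinate change $\phi \in T + \m\PS{T}$; combining the two fixed-point equations $\CN_F(T) = T$ and $\CN_{F'}(T) = T$ with the contractive behavior of the operators should force $\phi = T$ by induction on the $\m$-adic filtration. The main obstacle I anticipate is to make precise the interplay between $\CN_F$ and changes of coordinate: because the operator depends on $F$, the iteration must either update $F$ at each step or be reformulated as an action on coordinates for a fixed reference $F$. Verifying the contraction estimate carefully---tracking how $\CN_F$ interacts with the filtration by powers of $\m$---should be the technical heart of the argument.
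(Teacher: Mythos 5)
Your translation of Ando's factorization condition into the fixed-point equation $\CN_F(T) = T$ for a Coleman-type norm operator is exactly the paper's starting point, and you correctly identify the central technical issue: $\CN_F$ depends on $F$, so naively updating both the coordinate and the operator at each step makes convergence and comparison of fixed points hard to control. But you leave this as an unresolved ``obstacle,'' and that is precisely the gap the paper closes with a conjugation identity. If $u \in T + T\m\PS{T}$ carries $F_\alpha$ to $F_{\alpha_u} = u \circ F_\alpha \circ u^{-1}$, then a direct computation using the fact that $u$ maps the roots of $[p]_{F_\alpha}$ bijectively to those of $[p]_{F_{\alpha_u}}$ gives $\CN_{F_{\alpha_u}}(T) \circ u = \CN_{F_\alpha}(u)$, so the condition $\CN_{F_{\alpha_u}}(T) = T$ is equivalent to $\CN_{F_\alpha}(u) = u$. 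This reduces everything to finding the unique fixed point of the \emph{single} operator $\CN_{F_\alpha}$ on $T + T\m\PS{T}$, with $F_\alpha$ fixed from the start. Existence then falls out of iterating that one operator --- $u = \CN_{F_\alpha}^\infty(T) := \lim_i \CN_{F_\alpha}^i(T)$ converges because $\CN_{F_\alpha}^i(T)/\CN_{F_\alpha}^{i-1}(T) \in 1 + \m^i\PS{T}$ --- and uniqueness follows because $\CN_{F_\alpha}^\infty$ annihilates $1 + \m\PS{T}$. Without this identity, your iteration $\phi_{k+1} = \CN_{F_k}(\phi_k)$ with a moving $F_k$ does not obviously contract, and your uniqueness sketch has no mechanism to compare the two fixed-point conditions $\CN_F(T) = T$ and $\CN_{F'}(T) = T$ across the coordinate change relating $F$ and $F'$.

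A second gap is glossed over: Coleman's operator is classically defined over a complete discrete valuation ring, whereas $\pi_0(E_n) \cong W(\F_p)\PS{u_1,\ldots,u_{n-1}}$ is not a DVR (it has Krull dimension $n$). You assert the operator exists on $\pi_0(E_n)\PS{T}$ by symmetry and descent along $[p]_F$; that is the right idea, but it requires showing that over a complete local \emph{domain} with $p \neq 0$ the canonical lift of Frobenius factors via Weierstrass preparation as a unit times a separable degree-$p^n$ polynomial, that it therefore has exactly $p^n$ roots, and that it is right-cancellative with respect to composition --- these are what make division of the symmetric product by the Frobenius lift well-defined. The paper devotes Section \ref{norm-coherent conditions} and the following section to establishing this, replacing $[p]_F$ with the canonical Frobenius lift $l_p$ in the general setting (which in the Honda case coincides with $[p]_F$).
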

    Such a coordinate is said to be \textbf{\em{norm-coherent}}, in that the right-hand side has the form of a norm map. The condition says that the canonical lift of Frobenius, which is just the map of multiplication by $p$ corresponding to the $p$-series in this case, coincides with the norm map. For a detailed discussion about the condition, see Section \ref{norm-coherent conditions} and \cite[Section 6]{Zhu20}. \par 
    On the number-theoretic side, an important tool invented in Lubin and Tate's explicit construction of the local Artin map in local class field theory is the family of Lubin--Tate formal group laws. Suppose a prime number $p$ is a uniformizer of a local field $K$, i.e., $K$ is an unramified extension of $\Q_p$. Then a Lubin--Tate formal group law reduces to a Honda formal group law over the residue field. In 1979, Coleman proved an interpolation theorem on division values in local fields, with applications to $p$-adic $L$-functions and modular units \cite[Theorem A]{Col79}. For that, he constructed a norm operator $\CN_F$ depending on a Lubin--Tate formal group law $F$ such that  
    \begin{equation*}
        \CN_F(g) \circ [p]_F(T) = \prod_{\{\lambda \colon [p]_F(\lambda) = 0\}} g(T +_F \lambda)
    \end{equation*}
    Rezk conjectured that the Coleman norm and Ando's algebraic criteria were closely related. Here, we prove Theorem \ref{Ando Theorem 4} via the norm operators $\CN_F$. \par 
    In more detail, the original definition of a norm operator restricts to the special case when $R$ is a complete discrete valuation ring with uniformizer $p$. We shall first give a description for Coleman's norm operators and prove Ando's theorem in this special case. Our proof will only depend on several properties of the norm operators and do not require $G$ to be a Honda formal group law. In view of this, we will generalize the definition of a norm operator to complete local domains in which $p \neq 0$, and show that the generalized norm operator satisfies the desired properties. In particular, $\pi_0(E_n)$ is a complete local domain with $p \neq 0$. The main result of this article is the following. 
    \begin{theorem} \label{Not precise goal}
        Suppose $G$ is a formal group law of finite height over $k$ and $R$ is a complete local domain with $p \neq 0$ whose residue field contains $k$. Then in each $\star$-isomorphism class of lifts of $G$ to $R$, there is a unique lift whose corresponding coordinate is norm-coherent. 
    \end{theorem}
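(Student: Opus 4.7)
The plan is to convert the norm-coherence condition into a fixed-point equation for a Coleman-type norm operator $\CN_{F_0}$ and then iterate to produce the unique solution. I first fix an arbitrary representative $F_0$ of the given $\star$-isomorphism class, and note that any other lift in the class has the form $F = \phi^{-1} \circ F_0 \circ (\phi \times \phi)$ for some power series $\phi(T) \in R\PS{T}$ with $\phi(T) \equiv T$ modulo both $\m$ and $T^{2}$. A short computation, substituting $g = \phi^{-1}$ into the defining formula
\begin{equation*}
\CN_{F_0}(g) \circ [p]_{F_0}(T) = \prod_{\{\mu \, : \, [p]_{F_0}(\mu) = 0\}} g(T +_{F_0} \mu),
\end{equation*}
shows that $F$ is norm-coherent if and only if $\phi^{-1}$ is a fixed point of $\CN_{F_0}$. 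The theorem thus reduces to the existence and uniqueness of a fixed point of $\CN_{F_0}$ among the admissible $\star$-isomorphisms.

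I would then construct such a fixed point by iterating from $\psi_0(T) := T$, setting $\psi_{k+1} := \CN_{F_0}(\psi_k)$. The finite-height hypothesis implies $[p]_{\overline{F_0}}(T) = u T^{p^n} + \cdots$ with $u \in k^{\times}$, so the $p$-torsion roots $\mu$ of $[p]_{F_0}$ all reduce to $0$; this keeps each $\psi_k$ in the admissible class and yields the key contraction estimate
\begin{equation*}
\psi \equiv \psi' \pmod{\m^{j}} \;\;\Longrightarrow\;\; \CN_{F_0}(\psi) \equiv \CN_{F_0}(\psi') \pmod{\m^{j+1}},
\end{equation*}
essentially because inverting $[p]_{F_0}$ in the defining formula shifts $\m$-adic orders by one. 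Completeness of $R$ then produces a Cauchy limit $\psi_\infty$, which is the desired norm-coherent coordinate; uniqueness is automatic, since any two fixed points would be congruent modulo $\m^j$ for every $j$.

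The main obstacle will be establishing the generalized Coleman norm operator in the setting of Theorem \ref{Not precise goal}, where $R$ is only a complete local domain with $p \neq 0$ rather than a complete DVR with uniformizer $p$. Specifically, one must show that (a) the product $\prod_{\mu} g(T +_{F_0} \mu)$ factors uniquely through composition with $[p]_{F_0}$, so that $\CN_{F_0}(g)$ is a well-defined element of $R\PS{T}$, and (b) this operator satisfies the contraction estimate above. Both rely on $[p]_{F_0}$ behaving as a finite flat, degree-$p^n$ self-cover of the formal disk $\Spf R\PS{T}$, which in turn follows from $G$ having finite height together with a Weierstrass-style preparation for $[p]_{F_0}$ over $R$. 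Once these structural results are in place, the iterative construction and uniqueness of the previous paragraph complete the proof.
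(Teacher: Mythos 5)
Your overall strategy exactly matches the paper's: translate membership in the $\star$-isomorphism class into a choice of $\phi \in T + T\m\PS{T}$ with $F = \phi^{-1}\circ F_0\circ(\phi\times\phi)$, reformulate norm-coherence of $F$ as the fixed-point equation $\CN_{F_0}(\phi^{-1}) = \phi^{-1}$, produce the fixed point as $\lim_i\CN_{F_0}^i(T)$ using a contraction property, and read off uniqueness from multiplicativity. This is, essentially verbatim, the structure of Sections~\ref{Special Proof} and~5 of the paper, and your identification of the generalized Coleman operator over a complete local domain as the main technical obstacle is also the paper's.

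There is, however, a genuine gap in your treatment of the general case: you build the norm operator around the $p$-series, requiring $\CN_{F_0}(g)\circ [p]_{F_0} = \prod_\mu g(T+_{F_0}\mu)$, and implicitly take norm-coherence to mean $[p]_F(T) = \prod_\lambda(T+_F\lambda)$. That is only correct when $[p]_G(T) = T^{p^n}$, i.e., in the Honda/Lubin--Tate situation of Section~\ref{Special Proof}. For a general $G$ of finite height one has $[p]_F \equiv [p]_G \equiv uT^{p^n}+\cdots \pmod{\m}$ with $u \in k^\times$ possibly not $1$, while the product $\prod_\lambda(T+_F\lambda)$ always reduces to $T^{p^n}$; the naive identity $[p]_F = \prod_\lambda(T+_F\lambda)$ thus fails already modulo $\m$, so with your formulation Theorem~\ref{Not precise goal} would be vacuously false. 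The missing ingredient is precisely what Section~\ref{norm-coherent conditions} of the paper supplies: one must replace $[p]_F$ by the canonical lift of Frobenius $l_p$, the unique isogeny from $F$ to ${\rho^n}^*{t^n}^*F_{\rm univ}$ with the same kernel as $[p]_F$ and reducing to $T^{p^n}$ modulo $\m$. The norm operator is then defined by $\CN_F(g)\circ l_p = \prod_\lambda g(T+_F\lambda)$ and norm-coherence becomes $l_p = f_p$, equivalently $\CN_F(T) = T$, which is consistent modulo $\m$. Once $[p]_{F_0}$ is replaced by $l_p$ (the two coincide exactly in the special case, which is why your version looks plausible), your iteration argument goes through. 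A smaller point: the contraction estimate is not "because inverting $[p]_{F_0}$ shifts $\m$-adic orders"; the gain of one power of $\m$ comes from the fact that, after expanding $\prod_\lambda g(T+_F\lambda)$, the cross terms are symmetric functions of the roots $\lambda$, all of which lie in $\m$ since the Weierstrass polynomial of $[p]_F$ reduces to $T^{p^n}$. Right-cancellability of $l_p$ then transfers this estimate to $\CN_F(g)$ without changing it.
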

    A more precise and explicit formulation of Theorem \ref{Not precise goal} will be given as Theorem \ref{Goal}. 
    \begin{remark} \label{Zhu and Ando}
        Zhu generalized the above theorem to apply to arbitrary complete local rings $R$ following the original proof by Ando \cite[Theorem 1.2]{Zhu20}. It would be interesting to have an alternative approach from number-theoretic constructions in this generality. 
    \end{remark}

    \section{Coleman norm operators from explicit local class field theory after Lubin and Tate} \label{Section 2}
    In this section, suppose that $k = \F_q$ with $q = p^n$. Suppose $K$ is a local field with integer ring $\O_K$, maximal ideal $\m$, and residue field $k$. Pick a uniformizer $\pi$ of $\O_K$ and let
    \begin{equation} \label{def of TF}
        \TF_\pi := \{\alpha(T) \in \O_K\llbracket T \rrbracket \colon \alpha(T) \equiv \pi T \mod{T^2},\ \ \alpha(T) \equiv T^q \mod{\pi}\}
    \end{equation}
    In explicit local class field theory, we have the following.  
    \begin{proposition}\label{LT FGL}
        For any $\alpha \in \TF_\pi$, there is a unique formal group law $F_\alpha$ over $\O_K$ such that $\alpha \in \End(F_\alpha)$. 
        \begin{proof}
            See \cite[I, 2.12]{Mil20}. 
        \end{proof}
    \end{proposition}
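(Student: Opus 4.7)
My plan is to reduce everything to the standard Lubin--Tate lemma and then apply it once with the linear form $L(X,Y) = X + Y$. Concretely, I would prove the following auxiliary statement first: for any $\alpha, \beta \in \TF_\pi$ and any linear form $L(X_1,\dots,X_m) = a_1 X_1 + \cdots + a_m X_m$ with $a_i \in \O_K$, there is a unique power series $\Phi \in \O_K\PS{X_1,\dots,X_m}$ with $\Phi \equiv L \pmod{\deg 2}$ satisfying the functional equation
\begin{equation*}
    \alpha \circ \Phi(X_1,\dots,X_m) = \Phi\bigl(\beta(X_1),\dots,\beta(X_m)\bigr).
\end{equation*}
This lemma is the real content of the proposition; once it is in hand, the formal group law and its uniqueness fall out almost formally.

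The lemma itself is proved by successive approximation, building $\Phi$ one homogeneous degree at a time. Having constructed $\Phi_d$ satisfying the equation modulo $\deg d+1$, I would look for a correction $E$ homogeneous of degree $d+1$ so that $\Phi_{d+1} := \Phi_d + E$ works modulo $\deg d+2$. Expanding the two sides of the functional equation and using $\alpha(T) \equiv \pi T \pmod{T^2}$ and $\beta(X_i) \equiv \pi X_i \pmod{\deg 2}$, the defect reduces to an equation of the form
\begin{equation*}
    (\pi - \pi^{d+1}) E \equiv D_{d+1} \pmod{\deg d+2},
\end{equation*}
where $D_{d+1}$ is the degree $d+1$ part of $\alpha \circ \Phi_d - \Phi_d \circ (\beta, \dots, \beta)$. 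Since $\pi - \pi^{d+1} = \pi(1 - \pi^d)$ is $\pi$ times a unit, solvability (and uniqueness) of $E$ in $\O_K$ amounts to $D_{d+1}$ being divisible by $\pi$. This divisibility is exactly the congruence $\alpha(T) \equiv \beta(T) \equiv T^q \pmod{\pi}$: both sides of the functional equation reduce modulo $\pi$ to $\Phi_d(X_1,\dots,X_m)^q$ by the Frobenius-like identity $(\sum c_i T^i)^q \equiv \sum c_i^q T^{qi} \pmod{\pi}$ in characteristic $p$, so the defect vanishes mod $\pi$ automatically.

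With the lemma proved, I would construct $F_\alpha$ by applying it with $\beta = \alpha$ and $L(X,Y) = X + Y$, yielding a unique $F_\alpha(X,Y) \in \O_K\PS{X,Y}$ with $F_\alpha \equiv X+Y \pmod{\deg 2}$ and $\alpha \circ F_\alpha = F_\alpha \circ (\alpha, \alpha)$. The latter identity is precisely $\alpha \in \End(F_\alpha)$. The formal group axioms (commutativity, associativity, and the existence of a strict identity) follow by a standard trick: each axiom, e.g.\ $F_\alpha(X, F_\alpha(Y,Z)) = F_\alpha(F_\alpha(X,Y), Z)$, presents both sides as power series satisfying the lemma's functional equation for a suitable $L$ (here $L = X + Y + Z$), so the uniqueness clause of the lemma forces equality.

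Finally, uniqueness of $F_\alpha$ in the proposition follows the same way: any formal group law $F'$ over $\O_K$ with $\alpha \in \End(F')$ satisfies $F'(X,Y) \equiv X + Y \pmod{\deg 2}$ (by definition of a formal group law) and $\alpha \circ F' = F' \circ (\alpha,\alpha)$, hence coincides with $F_\alpha$ by the uniqueness in the auxiliary lemma. The single delicate point in this whole argument is the inductive step of the lemma, where one must verify that the defect is divisible by $\pi$; this is where the two defining congruences of $\TF_\pi$ conspire, and it is the only place number theory (as opposed to formal manipulation of power series) really enters.
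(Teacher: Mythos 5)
Your proof is correct and follows exactly the same route as the paper, which simply cites Milne's notes \cite[I, 2.12]{Mil20}: the standard Lubin--Tate lemma on power series commuting with pairs $\alpha,\beta \in \TF_\pi$, proved degree-by-degree using that $\pi(1 - \pi^d)$ is $\pi$ times a unit and that the defect vanishes modulo $\pi$ by the Frobenius congruence, then applied with $L = X+Y$ and the uniqueness clause invoked for each formal-group-law axiom. (There is a harmless sign slip in the inductive step: with $D_{d+1}$ the degree-$(d+1)$ part of $\alpha\circ\Phi_d - \Phi_d\circ(\beta,\dots,\beta)$, the equation should read $(\pi - \pi^{d+1})E \equiv -D_{d+1}$.)
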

    \begin{proposition}\label{End LT FGL}
        \begin{itemize}
            \item [(a)] For any $\alpha,\beta \in \TF_\pi$ and $a \in \O_K$, there is a unique element $[a]_{\beta,\alpha}(T)$ in $T\O_K\llbracket T \rrbracket$ such that $[a]_{\beta,\alpha}(T) \equiv aT \mod{T^2}$ and $[a]_{\beta,\alpha} \in \Hom(F_\alpha,F_\beta)$. 
            \item [(b)] Moreover, the map $a \mapsto [a]_{\alpha,\alpha}(T)$ gives an isomorphism $\O_K \to \End(F_\alpha)$. In particular, $\alpha(T) = [\pi]_{\alpha,\alpha}(T)$. 
        \end{itemize}
        \begin{proof}
            See \cite[I, 2.14 and 2.17]{Mil20}. 
        \end{proof}
    \end{proposition}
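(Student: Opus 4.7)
The plan is to deduce both (a) and (b) from a single existence-and-uniqueness statement, the \emph{Lubin--Tate lemma}: for any $\alpha, \beta \in \TF_\pi$ and any $a \in \O_K$, there is a unique $f \in T\O_K\PS{T}$ with $f(T) \equiv aT \pmod{T^2}$ and $\beta \circ f = f \circ \alpha$. I would then identify this $f$ with $[a]_{\beta,\alpha}$ by showing that it is a homomorphism $F_\alpha \to F_\beta$, handle the uniqueness clause in (a) by a separate degree-chasing argument, and derive (b) by transporting the $\O_K$-structure to $\End(F_\alpha)$.

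First I would construct $f$ by successive approximation. Starting from $f_1(T) = aT$, I would inductively lift a polynomial $f_n \in \O_K[T]$ of degree $n$ satisfying $f_n \equiv aT \pmod{T^2}$ and $\beta \circ f_n \equiv f_n \circ \alpha \pmod{T^{n+1}}$ to $f_{n+1} = f_n + cT^{n+1}$. Using $\beta'(0) = \pi$ and $\alpha(T)^{n+1} \equiv \pi^{n+1} T^{n+1} \pmod{T^{n+2}}$, the next-order congruence reduces to the single scalar equation $c(\pi^{n+1} - \pi) = e_{n+1}$, where $e_{n+1}$ is the coefficient of $T^{n+1}$ in $\beta \circ f_n - f_n \circ \alpha$. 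The main obstacle is to verify that $e_{n+1} \in \pi\O_K$, so that $c = e_{n+1}/(\pi(\pi^n - 1))$ is defined in $\O_K$ (noting $\pi^n - 1$ is a unit). This is precisely where both conditions defining $\TF_\pi$ enter: reducing modulo $\pi$ and using $\alpha \equiv \beta \equiv T^q \pmod{\pi}$ with $q = p^n$ and $k = \F_q$, the Frobenius identity on coefficients in $\F_q$ gives $\bar\beta \circ \bar{f_n} = \bar{f_n}(T)^q = \bar{f_n}(T^q) = \bar{f_n} \circ \bar\alpha$ in $k[T]$, so $\beta \circ f_n - f_n \circ \alpha \equiv 0 \pmod \pi$. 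Uniqueness of $c$ at each stage yields uniqueness of $f$.

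For (a), the constructed $f$ is a homomorphism $F_\alpha \to F_\beta$ by the two-variable analogue of the same argument: both $f(F_\alpha(X,Y))$ and $F_\beta(f(X), f(Y))$ satisfy the functional equation $\beta(h(X,Y)) = h(\alpha(X), \alpha(Y))$ -- invoking $\alpha \in \End(F_\alpha)$ and $\beta \in \End(F_\beta)$ from Proposition \ref{LT FGL} -- and share linear part $a(X+Y)$, so they coincide by successive approximation with total degree in $X,Y$ replacing degree in $T$. Uniqueness of $[a]_{\beta,\alpha}$ among homs with fixed linear term then follows by a standard degree-chasing argument on the formal $F_\beta$-difference $h := f(T) +_{F_\beta} [-1]_{F_\beta}(g(T))$ of two such homs: if $h \equiv cT^n \pmod{T^{n+1}}$ with $n \geq 2$, the hom identity $h(F_\alpha(X,Y)) = F_\beta(h(X), h(Y))$ forces $c(X+Y)^n = cX^n + cY^n$ in total degree $n$, hence $c = 0$ since $\O_K$ has characteristic zero. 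For (b), uniqueness in (a) instantly makes $a \mapsto [a]_{\alpha,\alpha}$ a ring homomorphism: the series $F_\alpha([a]_{\alpha,\alpha}(T), [b]_{\alpha,\alpha}(T))$ and $[a]_{\alpha,\alpha} \circ [b]_{\alpha,\alpha}$ both lie in $\End(F_\alpha)$ with linear parts $(a+b)T$ and $abT$, so they equal $[a+b]_{\alpha,\alpha}$ and $[ab]_{\alpha,\alpha}$ respectively. Injectivity is read off the linear coefficient; surjectivity holds because any endomorphism is pinned down by its linear part via the same uniqueness (taking $\beta = \alpha$). Finally, $\alpha \in \End(F_\alpha)$ by Proposition \ref{LT FGL} with linear term $\pi T$, forcing $\alpha = [\pi]_{\alpha,\alpha}$.
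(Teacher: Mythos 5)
Your proof is correct and reproduces the standard Lubin--Tate argument that the paper's cited reference (Milne's notes, I.2.14 and I.2.17) uses: the successive-approximation lemma to produce the unique series commuting with $\alpha$ and $\beta$, identification of that series as a homomorphism via the two-variable version of the same lemma, and transport of ring structure along the resulting bijection. The one small departure is that you prove the uniqueness clause of (a) directly by a characteristic-zero degree-chasing argument on the formal difference, rather than deducing it from the commutation-uniqueness of the Lubin--Tate lemma, but this is a minor stylistic choice and both routes are standard.
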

    The formal group laws $F_\alpha$ characterized by Proposition \ref{LT FGL} are the \textbf{\em{Lubin--Tate formal group laws}}. Let $\alpha \in \TF_\pi$ and $\alpha = \tilde{\alpha}v$ for some unit $v \in \O_K\llbracket T \rrbracket$ and some polynomial $\tilde{\alpha} \in \O_K[T]$ by the Weierstrass preparation theorem. We then define a finite set 
    \begin{equation} \label{def of lambda_alpha,1}
        \Lambda_{\alpha,1} := \{\mbox{roots of }\tilde{\alpha}\mbox{ in a fixed algebraic closure of K}\}
    \end{equation}
    Suppose $\O_K\LS{T}$ is the ring of formal Laurent series with coefficients in $\O_K$. We assign the ``compact--open" topology to $\O_K\LS{T}$, i.e., a sequence $\{g_n\}$ converges to $g$ if and only if for any compact subset $A$ not containing $0$ in $\m$, and for each $\epsilon > 0$, there exists a positive integer $N = N(A,\epsilon)$ such that $|g_n(a) - g(a)| < \epsilon$ for all $a \in A$ and $n \geqslant N$. Given a Lubin--Tate formal group law $F_\alpha$, the Coleman norm operator is characterized by the following.  
    \begin{theorem} [{{\cite[Theorem 11 and Corollary 12]{Col79}}}] \label{Coleman Norm Operator}
        As notations above, there exists a unique function $\CN_{F_\alpha} \colon \O_K\LS{T} \to \O_K\LS{T}$ satisfying
        \begin{equation*}
            \CN_{F_\alpha}(g) \circ [p]_{F_\alpha}(T) = \prod_{\lambda \in \Lambda_{\alpha,1}} g(T +_{F_\alpha} \lambda)
        \end{equation*}
        for every $g \in \O_K\LS{T}$. Moreover, $\CN_{F_\alpha}$ is continuous and multiplicative. 
    \end{theorem}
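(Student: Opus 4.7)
The plan is to construct $\CN_{F_\alpha}$ as the determinant norm of the finite flat extension $\O_K\PS{S} \hookrightarrow \O_K\PS{T}$, $S \mapsto \alpha(T) = [p]_{F_\alpha}(T)$, and then to identify this norm with the prescribed product by base-changing to a field over which the cover splits. The relevant finiteness comes from Weierstrass preparation: since $\alpha(T) \equiv T^q \pmod{\pi}$, the preparation theorem applied to $\alpha(T) - S \in \O_K\PS{S}\PS{T}$ writes $\alpha(T) - S = u(T,S)\,\tilde{p}(T,S)$ with $u$ a unit and $\tilde{p}(T,S) \in \O_K\PS{S}[T]$ a distinguished polynomial of degree $q$, whence $\O_K\PS{T}$ is a free $\O_K\PS{\alpha(T)}$-module of rank $q$ with basis $1, T, \dots, T^{q-1}$ (by iterated Weierstrass division). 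For $g \in \O_K\PS{T}$ I would then define $\CN_{F_\alpha}(g) \in \O_K\PS{T}$ to be the unique power series such that $\CN_{F_\alpha}(g) \circ \alpha(T)$ equals the determinant of ``multiplication by $g$'' on this free module; multiplicativity and integrality of $\CN_{F_\alpha}$ are then automatic.

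The main step is to identify this determinant with the prescribed product. After base change to the splitting field $K(\Lambda_{\alpha,1})\PS{S}$, the polynomial $\tilde{p}(T,S)$ factors as $\prod_\lambda (T - r_\lambda(S))$, where $r_\lambda(S) := T_0(S) +_{F_\alpha} \lambda$ and $T_0$ is the unique formal power-series root of $\alpha(T_0) = S$ with $T_0(0) = 0$: this parametrization uses the identity $\alpha(T +_{F_\alpha} \lambda) = \alpha(T) +_{F_\alpha} \alpha(\lambda) = \alpha(T)$ furnished by Proposition \ref{End LT FGL}, together with the fact that the roots of $\tilde{\alpha}$ are simple (the $\pi$-torsion subgroup of $F_\alpha$ being a free $\O_K/\pi$-module of rank one, so that $\alpha'(\lambda) = \tilde{\alpha}'(\lambda)\, v(\lambda) \neq 0$ for every $\lambda$). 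By the Chinese Remainder theorem the base-changed algebra therefore decomposes as $\prod_\lambda K(\Lambda_{\alpha,1})\PS{S}$, with the $\lambda$-th projection given by $T \mapsto r_\lambda(S)$; multiplication by $g$ acts diagonally as $g(r_\lambda(S))$ on the $\lambda$-th factor, and its determinant is $\prod_\lambda g(T_0(S) +_{F_\alpha} \lambda)$. Substituting $S = \alpha(T)$, so that $T_0(S) = T$, yields the required identity.

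To extend from $\O_K\PS{T}$ to $\O_K\LS{T}$, I would first check that $\CN_{F_\alpha}(T)$ is invertible in $\O_K\LS{T}$: applying the formula with $g = T$ and comparing the lowest-order $T$-expansions of the two sides of $\CN_{F_\alpha}(T) \circ \alpha(T) = \prod_\lambda (T +_{F_\alpha} \lambda)$ via Vieta for $\tilde{\alpha}$, one finds $\CN_{F_\alpha}(T)(S) = u(S) \cdot S$ for a unit $u \in \O_K\PS{S}^\times$. One then extends multiplicatively by $\CN_{F_\alpha}(T^{-n} g_0) := \CN_{F_\alpha}(T)^{-n}\,\CN_{F_\alpha}(g_0)$; both multiplicativity and the product formula propagate. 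Uniqueness follows from the injectivity of $h \mapsto h \circ \alpha$ on $\O_K\LS{T}$ (compare lowest-order terms, using $\alpha(T) = \pi T \cdot (\text{unit})$), and continuity in the compact--open topology from the fact that each coefficient of $\CN_{F_\alpha}(g)$ is a polynomial expression in finitely many coefficients of $g$ via the $q \times q$ determinant. The principal technical challenge is the Chinese Remainder decomposition in the middle paragraph, where one must reconcile the algebraic determinant computed integrally over $\O_K\PS{S}$ with the geometric product over sheets, which is a priori only visible after base change to the larger ring $K(\Lambda_{\alpha,1})\PS{S}$.
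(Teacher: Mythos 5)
Your construction of the norm operator is correct in outline, but it takes a genuinely different route from the one the paper (following Coleman, and reproduced in generalized form in Section~5 as Lemma~\ref{Lemma for CN} and Theorem~\ref{generalized CN}) uses. Coleman's proof, which the paper adopts, first establishes a division lemma: any $g \in \O_K\PS{T}$ invariant under the translations $T \mapsto T +_{F_\alpha}\lambda$ for $\lambda \in \Lambda_{\alpha,1}$ factors uniquely as $h\circ[p]_{F_\alpha}$, proved by an iterated Weierstrass division that peels off constants and divides by $[p]_{F_\alpha}$, using invariance to kill the remainder at each step. The norm operator is then defined directly by applying this lemma to the manifestly invariant product $\prod_\lambda g(T +_{F_\alpha}\lambda)$, and multiplicativity and continuity are deduced afterward from right-cancellativity of $[p]_{F_\alpha}$ (respectively $l_p$). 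You instead realize $\O_K\PS{T}$ as a free rank-$q$ module over $\O_K\PS{[p]_{F_\alpha}(T)}$ and define $\CN_{F_\alpha}(g)$ as the determinant of multiplication by $g$, identifying this determinant with the product after base change to $K(\Lambda_{\alpha,1})\PS{S}$ and a Chinese Remainder decomposition. Your route has the advantage that multiplicativity and integrality of $\CN_{F_\alpha}$ are automatic (determinants are multiplicative and land in the base ring), and it exhibits $\CN_{F_\alpha}$ as a genuine algebraic norm of a finite flat extension, which makes the analogy with field norms transparent and ties into the later statement $\CN_{F_\alpha}(u)(v_n) = N_{n+1,n}(u(v_{n+1}))$ in Section~\ref{Special Proof}. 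Coleman's division-lemma route is more elementary, avoids the need to control denominators of the sheets $r_\lambda(S) = T_0(S) +_{F_\alpha}\lambda$ (which live in $L\PS{S}$ only because $L$ is a field, and individually fail to be integral even though their product is), and transfers most directly to the setting of Section~5 where $R$ is merely a complete local domain. You correctly flag the CRT reconciliation as the technical crux; I would only add that the extension to Laurent series also deserves a word on why the $T$-adic pole order of a convergent sequence in $\O_K\LS{T}$ eventually stabilizes, so that $\CN_{F_\alpha}(T)^{-n}\CN_{F_\alpha}(g_0)$ indeed gives a continuous extension, a point that the cancellation argument of Theorem~\ref{generalized CN} sidesteps.
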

    The norm operator has the following properties. 
    \begin{lemma} \label{Properties of CN}
        Let $i \geqslant 1$, $g \in 1 + \m^i\PS{T}$ and $h$ be a unit in $\O_K\LS{T}$. Then 
        \begin{enumerate}
            \item [(a)] $\CN_{F_\alpha}(g) \in 1 + \m^{i+1}\PS{T}$ and 
            \item [(b)] $\CN_{F_\alpha}^{i}(h)/\CN_{F_\alpha}^{i-1}(h) \in 1 + \m^i \PS{T}$, where $\CN_{F_\alpha}^i$ denotes $i$ iterations of applying the norm operator $\CN_{F_\alpha}$.  
        \end{enumerate}
        \begin{proof}
            See \cite[Lemma 13]{Col79}. Here, part (b) looks different from \cite[Lemma 13(b)]{Col79}, $\CN_{F_\alpha}^{i}(h)/\phi\CN_{F_\alpha}^{i-1}(h) \in 1 + \pi^i \O_K\PS{T}$, where $\phi \in \Gal(H/K)$ is the Frobenius map for a complete unramified extension $H/K$ and $\pi$ is the corresponding uniformizer. For our applications, we need only consider $K$ itself with $\phi$ the identity. 
        \end{proof}
    \end{lemma}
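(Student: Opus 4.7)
The plan is to prove (a) directly by expanding the product that defines $\CN_{F_\alpha}(g) \circ [p]_{F_\alpha}$ and reducing to a power-sum divisibility on $\Lambda_{\alpha,1}$, then to deduce (b) from (a) by induction on $i$, with the base case $i = 1$ treated separately via reduction modulo a uniformizer of $K(\Lambda_{\alpha,1})$.

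For part (a), I would write $g = 1 + \pi^i f$ with $f \in \O_K\PS{T}$ and expand, using Theorem \ref{Coleman Norm Operator},
\begin{equation*}
    \CN_{F_\alpha}(g)(\alpha(T)) = \prod_{\lambda \in \Lambda_{\alpha,1}}\bigl(1 + \pi^i f(T +_{F_\alpha}\lambda)\bigr) = 1 + \pi^i \sum_{\lambda \in \Lambda_{\alpha,1}} f(T +_{F_\alpha}\lambda) + R(T),
\end{equation*}
where $R(T)$ collects all terms containing at least two factors of $\pi^i$, so that $R(T) \in \pi^{2i}\O_K\PS{T}$ ($\Gal(\bar K/K)$-invariance of the left-hand side descends $R$ from $\O_L\PS{T}$ to $\O_K\PS{T}$ for $L := K(\Lambda_{\alpha,1})$). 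Since $i \geqslant 1$ gives $2i \geqslant i+1$, the key reduction is to show
\begin{equation*}
    \sum_{\lambda \in \Lambda_{\alpha,1}} f(T +_{F_\alpha}\lambda) \in \pi\O_K\PS{T}.
\end{equation*}
For this, I would expand $f(T +_{F_\alpha} X) = \sum_{n \geqslant 0} b_n(T) X^n$ in $\O_K\PS{T}\PS{X}$ with $b_0(T) = f(T)$, rewriting the sum as $\sum_n b_n(T) P_n$ for the power sums $P_n := \sum_{\lambda} \lambda^n$. Since $\tilde\alpha$ is distinguished with $\tilde\alpha \equiv T^q \pmod \pi$ and $\tilde\alpha(0) = 0$, the elementary symmetric functions of $\Lambda_{\alpha,1}$ satisfy $e_1, \dots, e_{q-1} \in \pi\O_K$ and $e_q = 0$, so Newton's identities give $P_n \in \pi\O_K$ for all $n \geqslant 1$, while $P_0 = q = p^n \in \pi\O_K$ by $\pi = p$. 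This yields $\CN_{F_\alpha}(g)(\alpha(T)) \in 1 + \pi^{i+1}\O_K\PS{T}$. To descend from this composite to $\CN_{F_\alpha}(g)$ itself, I would invoke Weierstrass preparation, which presents $\O_K\PS{T}$ as a free $\O_K\PS{\alpha(T)}$-module with basis $\{1, T, \dots, T^{q-1}\}$, so the congruence transfers faithfully.

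For part (b), I would induct on $i$. For $i = 1$, reducing the defining identity modulo a uniformizer $\pi_L$ of $L$ annihilates every $\lambda$, and in $k\PS{T}$ I obtain
\begin{equation*}
    \overline{\CN_{F_\alpha}(h)}(T^q) = \bar h(T)^q = \bar h(T^q),
\end{equation*}
the last equality using that $k = \F_q$ is fixed by the $q$th-power Frobenius. Injectivity of $T \mapsto T^q$ on $k\PS{T}$ then gives $\CN_{F_\alpha}(h) \equiv h \pmod \pi$, hence the claim for a unit $h$. For $i \geqslant 2$, set $g_{i-1} := \CN_{F_\alpha}^{i-1}(h)/\CN_{F_\alpha}^{i-2}(h)$, which lies in $1 + \pi^{i-1}\O_K\PS{T}$ by the inductive hypothesis; multiplicativity of $\CN_{F_\alpha}$ (Theorem \ref{Coleman Norm Operator}) identifies $\CN_{F_\alpha}(g_{i-1})$ with $\CN_{F_\alpha}^{i}(h)/\CN_{F_\alpha}^{i-1}(h)$, and part (a) applied to $g_{i-1}$ places this in $1 + \pi^i \O_K\PS{T}$.

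I expect the main obstacle to be the power-sum divisibility $P_n \in \pi\O_K$, and especially its propagation to $n \geqslant q$: the naive induction from $e_1, \dots, e_{q-1} \in \pi\O_K$ alone stalls at the $n = q$ case of Newton's identity, where the term $(-1)^{q-1} q\, e_q$ appears, and I would rely on the vanishing $e_q = 0$ (which uses $0 \in \Lambda_{\alpha,1}$) to continue the induction past $n = q - 1$.
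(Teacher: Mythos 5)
The paper offers no proof of its own here; it simply cites \cite[Lemma 13]{Col79} and records the cosmetic change (no Frobenius twist because the ground field is $K$ itself). So the only fair thing to check is whether your direct argument is sound on its own terms, and it largely is. The plan for part (a) --- expand the defining product, isolate the linear term, reduce the claim to the power-sum congruences $P_n \in \pi\O_K$, settle those by Newton's identities from the distinguished-polynomial shape of $\tilde\alpha$, then descend from $\CN_{F_\alpha}(g)\circ\alpha$ to $\CN_{F_\alpha}(g)$ itself using that $\O_K\PS{T}$ is free over $\alpha^*\O_K\PS{T}$ --- is correct and complete. (One small note: the ``stall at $n=q$'' you flag is not really a stall; a distinguished polynomial of degree $q$ already has \emph{all} non-leading coefficients in $\m$, so $e_q \in \pi\O_K$ for free, and the recursion $P_n = \sum_{j=1}^{q}(-1)^{j-1}e_j P_{n-j}$ closes for all $n>q$ without appealing to $e_q=0$. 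Using $e_q=0$ is of course also fine.)

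There is, however, a genuine gap in the base case of part (b). The reduction mod $\pi_L$ gives you $\CN_{F_\alpha}(h)\equiv h \pmod{\pi}$ as an identity of Laurent series, hence $\CN_{F_\alpha}(h)/h \in 1+\pi\,\O_K\LS{T}$. But the statement asserts membership in $1+\m\PS{T}$, i.e.\ that the quotient is a \emph{power} series (no negative powers of $T$). That does not follow from the congruence alone: a priori $\CN_{F_\alpha}(h)/h$ could have terms $\pi a_n T^{-n}$. To close this you need an extra step, for instance the multiplicativity factorization: write $h = cT^m u$ with $c\in\O_K^\times$, $m\in\Z$, $u\in\O_K\PS{T}^\times$, so that
\begin{equation*}
\frac{\CN_{F_\alpha}(h)}{h} \;=\; c^{q-1}\left(\frac{\CN_{F_\alpha}(T)}{T}\right)^{m}\cdot\frac{\CN_{F_\alpha}(u)}{u},
\end{equation*}
and observe that each factor lies in $\O_K\PS{T}^\times$: $c^{q-1}\equiv 1\pmod\pi$ since $|k^\times|=q-1$; $\CN_{F_\alpha}(T)$ has zero constant term (evaluate the defining identity at $T=0$, using $0\in\Lambda_{\alpha,1}$) and $\CN_{F_\alpha}(T)\equiv T\pmod\pi$, so $\CN_{F_\alpha}(T)/T\in 1+\pi\O_K\PS{T}$; and $u,\CN_{F_\alpha}(u)$ are both power-series units with $\overline{\CN_{F_\alpha}(u)}=\bar u$. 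Once the $i=1$ case produces a \emph{power} series in $1+\m\PS{T}$, your inductive step (apply part (a) to $g_{i-1}$) goes through unchanged. So the argument is salvageable, but as written ``hence the claim for a unit $h$'' jumps over the integrality in $T$.
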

    As a consequence of Lemma \ref{Properties of CN}(b), the sequence $\{\CN_{F_\alpha}^i(h)\}$ converges in $\O_K\LS{T}$. Let $\CN_{F_\alpha}^\infty(h) := \displaystyle\lim_{i \to \infty} \CN_{F_\alpha}^i(h)$. In particular, by Lemma \ref{Properties of CN}(a), we have that 
    \begin{equation} \label{kernel of CN^infty}
        \CN_{F_\alpha}^\infty\bigl(1 + \m\PS{T}\bigr) = 1
    \end{equation}
    Since $\CN_{F_\alpha}$ is continuous, 
    \begin{equation*}
        \CN_{F_\alpha}\bigl(\CN_{F_\alpha}^\infty(h)\bigr) = \CN_{F_\alpha}\bigl(\displaystyle\lim_{i \to \infty} \CN_{F_\alpha}^i(h)\bigr) = \displaystyle\lim_{i \to \infty} \CN_{F_\alpha}\bigl(\CN_{F_\alpha}^i(h)\bigr) = \CN_{F_\alpha}^\infty (h)
    \end{equation*}
    Moreover, the operator$\CN_{F_\alpha}^\infty$ is multiplicative since $\CN_{F_\alpha}$ is.

    \section{Proof of Ando's theorem in a special case} \label{Special Proof}
    We will first prove Theorem \ref{Ando Theorem 4} (and \ref{Not precise goal}) in a special case using Coleman norm operators. \par 
    In this section, suppose that $K$ is an unramified extension of $\Q_p$ of degree $n$ and $G$ is the Honda formal group law over $k \cong \F_{p^n}$ of height $n$. Here, $p$ is a uniformizer of $K$. \par 
    In \eqref{def of TF}, choose the uniformizer $\pi = p$. Given any $\alpha \in \TF_\pi$, let $F_\alpha$ be the associated Lubin--Tate formal group law so that $[p]_{F_\alpha}(T) = [\pi]_{\alpha,\alpha}(T) = \alpha(T)$ by Proposition \ref{End LT FGL}(b). Thus, $F_\alpha$ is a lift of $G$ to $\O_K$. Conversely, every lift of $G$ to $\O_K$ has $p$-series in $\TF_\pi$, so by the uniqueness in Proposition \ref{LT FGL} it is a Lubin--Tate formal group law. 
    \begin{definition} [$\star$-isomorphisms]\label{naive star-iso}
        Two formal group laws $F,F^\prime$ over $\O_K$ are said to be \textbf{\em{$\star$-isomorphic}} if there is an isomorphism $u \colon F \to F^\prime$ such that $u$ restricts to the identity series to $k$. A more general definition of $\star$-isomorphism will be given in Section \ref{norm-coherent conditions}. 
    \end{definition}
    \begin{theorem} \label{Special Ando}
        With notations as above and in \eqref{def of lambda_alpha,1}, in each $\star$-isomorphism class of lifts of $G$ to $\O_K$, there is a unique formal group law $F = F_\alpha$, necessarily a Lubin--Tate formal group law for some $\alpha \in \TF_\pi$, satisfying 
        \begin{equation} \label{Ando's criterion}
            [p]_{F_\alpha}(T) = \prod_{\lambda \in \Lambda_{\alpha,1}}(T +_{F_\alpha} \lambda) 
        \end{equation}
    \end{theorem}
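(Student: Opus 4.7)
The plan is to translate Ando's criterion \eqref{Ando's criterion} into a fixed-point equation for the Coleman norm operator $\CN_{F_\alpha}$, and then to use the convergence of iterated norms both to produce such a fixed point and to pin it down uniquely within each $\star$-isomorphism class.

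Setting $g(T) = T$ in the defining identity of Theorem \ref{Coleman Norm Operator} gives
\begin{equation*}
    \CN_{F_\alpha}(T)\bigl([p]_{F_\alpha}(T)\bigr) = \prod_{\lambda \in \Lambda_{\alpha,1}}(T +_{F_\alpha} \lambda).
\end{equation*}
Since $[p]_{F_\alpha}(T)$ has $T$-order one, substitution by $[p]_{F_\alpha}$ is injective on $\O_K\LS{T}$, so \eqref{Ando's criterion} for $F_\alpha$ is equivalent to $\CN_{F_\alpha}(T) = T$. More generally, for any $\star$-isomorphism $u \colon F_\alpha \to F_\beta$ (which by Definition \ref{naive star-iso} has $u(T) \in T + T\m\PS{T}$), the roots of $[p]_{F_\beta}$ are the images $u(\lambda)$ of the roots of $[p]_{F_\alpha}$, and $T +_{F_\beta} u(\lambda) = u(u^{-1}(T) +_{F_\alpha} \lambda)$; the same substitution argument then shows that Ando's criterion for $F_\beta$ is equivalent to the fixed-point equation $\CN_{F_\alpha}(u) = u$.

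For existence, starting from any Lubin--Tate lift $F_\alpha$ in a given $\star$-isomorphism class, I would set $u := \CN_{F_\alpha}^\infty(T)$. By Lemma \ref{Properties of CN}(b) applied to the unit $h = T$, this limit exists and $u/T \in 1 + \m\PS{T}$, so $u \in T + T\m\PS{T}$ has a composition inverse in $\O_K\PS{T}$ and defines a $\star$-isomorphism onto the lift $F_\beta$ obtained by transporting $F_\alpha$ along $u$ (which is Lubin--Tate since its $p$-series lies in $\TF_\pi$, by Proposition \ref{LT FGL}). Continuity of $\CN_{F_\alpha}$ makes $u$ a fixed point, so Ando's criterion holds for $F_\beta$. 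For uniqueness, given any other representative $F_\gamma$ of the same class satisfying \eqref{Ando's criterion} and any $\star$-isomorphism $w \colon F_\alpha \to F_\gamma$, the dictionary gives $\CN_{F_\alpha}(w) = w$; the ratio $u/w \in 1 + \m\PS{T}$ is then a fixed point of $\CN_{F_\alpha}$ by multiplicativity, so $\CN_{F_\alpha}^\infty(u/w) = u/w$, while \eqref{kernel of CN^infty} forces $\CN_{F_\alpha}^\infty(u/w) = 1$; hence $u = w$ and $F_\beta = F_\gamma$.

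The step I expect to be the main obstacle is establishing the dictionary between Ando's criterion for $F_\beta$ and the fixed-point equation $\CN_{F_\alpha}(u) = u$ along an arbitrary $\star$-isomorphism $u$; once this is in place, existence and uniqueness both exploit the two complementary features of $\CN_{F_\alpha}^\infty$ recorded in Section \ref{Section 2}: building a fixed point out of the unit $T$ via convergence, and collapsing $1 + \m\PS{T}$ to $1$ via \eqref{kernel of CN^infty}.
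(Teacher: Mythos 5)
Your proposal is correct and follows essentially the same route as the paper: translating \eqref{Ando's criterion} to the fixed-point equation $\CN_{F_\alpha}(u) = u$ along a $\star$-isomorphism $u$, producing the fixed point as $\CN_{F_\alpha}^\infty(T)$, and deriving uniqueness from multiplicativity together with \eqref{kernel of CN^infty}. The only cosmetic difference is that you justify cancelling $[p]_{F_\alpha}$ by injectivity of substitution (from $T$-order one over a domain) whereas the paper invokes the composition inverse of $[p]_{F_\alpha}$ in $K\PS{T}$; both are valid, and your version foreshadows the right-cancellativity argument the paper uses for $l_p$ in the general case.
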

    To prove this special case of Ando's theorem, we proceed as follows. In terms of the norm operator, we see that a Lubin--Tate formal group law satisfies \eqref{Ando's criterion} if and only if 
    \begin{equation} \label{Ando's condition in term of CN before canceling [p]_F}
        [p]_{F_\alpha}(T) = \prod_{\lambda \in \Lambda_{\alpha,1}} (T +_{F_\alpha} \lambda) = \bigl(\CN_{F_\alpha}(T) \circ [p]_{F_\alpha}\bigr)(T)
    \end{equation} 
    Since $p$ is invertible in $K$, $[p]_{F_\alpha}$ has a composition inverse in $K\PS{T}$. We can thus cancel the term from both sides above, so that condition \eqref{Ando's criterion} is equivalent to 
    \begin{equation} \label{Walker's conclusion}
        \CN_{F_\alpha}(T) = T
    \end{equation}
    Begin with any lift $F_\alpha$ of $G$ to $\O_K$. Let $u \in T + \pi T \O_K\PS{T} = T + T\m\PS{T}$. Then there is an element $\alpha_u \in \TF_\pi$ such that $u \circ F_\alpha \circ u^{-1} = F_{\alpha_u}$. Indeed, since $\alpha = [p]_{F_\alpha}$ and $\alpha_u = [p]_{F_{\alpha_u}}$, we have $\alpha_u = u \circ \alpha \circ u^{-1}$. Clearly $F_\alpha$ and $F_{\alpha_u}$ are $\star$-isomorphic. In order to show that there is a unique $u \in T + T\m\PS{T}$ such that $F_{\alpha_u}$ satisfies \eqref{Ando's criterion}, we are reduced to show that there is a unique $u \in T + T\m\PS{T}$ such that 
    \begin{equation} \label{condition for CN_F_alpha_u}
        \CN_{F_{\alpha_u}}(T) = T
    \end{equation}
    Note that $u$ induces a bijection from $\Lambda_{\alpha,1}$ to $\Lambda_{\alpha_u,1}$. By definition,  
    \begin{equation*}
        \bigl(\CN_{F_{\alpha_u}}(T) \circ [p]_{F_{\alpha_u}}\bigr)(T) = \prod_{\lambda \in \Lambda_{\alpha_u,1}} (T +_{F_
        {\alpha_u}} \lambda)
    \end{equation*}
    We rewrite this identity as follows:  
    \begin{align*}
        \bigl(\CN_{F_{\alpha_u}}(T) \circ u \circ [p]_{F_\alpha} \circ u^{-1}\bigr)(T) &= \prod_{\lambda \in \Lambda_{\alpha,1}} \bigl(T +_{F_{\alpha_u}} u(\lambda)\bigr) \\ 
        &= \prod_{\lambda \in \Lambda_{\alpha,1}} F_{\alpha_u}\biggl(u\bigl(u^{-1}(T)\bigr),u(\lambda)\biggr) \\ 
        &= \prod_{\lambda \in \Lambda_{\alpha,1}} u \circ F_\alpha\bigl(u^{-1}(T),\lambda\bigr) \\ 
        &= \prod_{\lambda \in \Lambda_{\alpha,1}} u \circ \bigl(u^{-1}(T) +_{F_\alpha} \lambda\bigr) \\ 
        &= \bigl(\CN_{F_\alpha}(u) \circ [p]_{F_\alpha}\bigr) \bigl(u^{-1}(T)\bigr)
    \end{align*}
    By canceling $[p]_{F_\alpha} \circ u^{-1}$ from both sides, we obtain $\bigl(\CN_{F_{\alpha_u}}(T) \circ u\bigr)(T) = \CN_{F_\alpha}(u)(T)$. Therefore, \eqref{condition for CN_F_alpha_u} is equivalent to 
    \begin{equation*}
        \CN_{F_\alpha}(u) = u
    \end{equation*}
    Consequently, it remains to show the following. 
    \begin{proposition}
        Given any $\alpha \in \mathcal{F}_\pi$, there is a unique $u \in T + T\m\PS{T}$, such that $\CN_{F_\alpha}(u) = u$. 
    \end{proposition}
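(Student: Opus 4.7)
The plan is to produce $u$ as an iterated limit of the Coleman norm operator applied to the coordinate $T$, and then to deduce uniqueness from multiplicativity together with the kernel description \eqref{kernel of CN^infty}.

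\textbf{Existence.} Since $T$ is a unit in $\O_K\LS{T}$, Lemma \ref{Properties of CN}(b) applies with $h = T$ and guarantees that the iterates $\CN_{F_\alpha}^i(T)$ form a Cauchy sequence in the compact--open topology, whose limit I denote $u := \CN_{F_\alpha}^\infty(T)$. The argument already recorded immediately after \eqref{kernel of CN^infty}, using continuity of $\CN_{F_\alpha}$, shows $\CN_{F_\alpha}(u) = u$. It remains to check that this limit has the right shape, i.e., $u \in T + T\m\PS{T}$. From Lemma \ref{Properties of CN}(b) we know $\CN_{F_\alpha}^i(T)/\CN_{F_\alpha}^{i-1}(T) \in 1 + \m^i\PS{T}$ for all $i \geqslant 1$, so by a telescoping product the ratio $u/T$ lies in $1 + \m\PS{T}$; equivalently, $u \in T + T\m\PS{T}$.

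\textbf{Uniqueness.} Suppose $u_1, u_2 \in T + T\m\PS{T}$ both satisfy $\CN_{F_\alpha}(u_j) = u_j$. Iterating and taking the limit gives $\CN_{F_\alpha}^\infty(u_j) = u_j$ for $j = 1,2$. Since $u_j \in T(1 + \m\PS{T})$, the ratio $u_1/u_2$ makes sense in $\O_K\LS{T}$ and lies in $1 + \m\PS{T}$. Multiplicativity of $\CN_{F_\alpha}^\infty$ together with \eqref{kernel of CN^infty} yields
\begin{equation*}
    u_1/u_2 = \CN_{F_\alpha}^\infty(u_1)/\CN_{F_\alpha}^\infty(u_2) = \CN_{F_\alpha}^\infty(u_1/u_2) = 1,
\end{equation*}
so $u_1 = u_2$.

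\textbf{Main obstacle.} The substantive content is packaged into Lemma \ref{Properties of CN}; once (a) and (b) are in hand the proof is almost formal. The only step requiring any care is verifying that the limit $u$ actually lies in the prescribed subset $T + T\m\PS{T} \subset \O_K\PS{T}$ rather than merely in $\O_K\LS{T}$, which is why I pass through the telescoping estimate on the ratios $\CN_{F_\alpha}^i(T)/\CN_{F_\alpha}^{i-1}(T)$ instead of trying to control $\CN_{F_\alpha}^i(T) - \CN_{F_\alpha}^{i-1}(T)$ directly. I expect the analogous generalization in later sections (from $\O_K$ to arbitrary complete local domains with $p \neq 0$) to require redoing Lemma \ref{Properties of CN} in that setting, but for the proposition as stated the argument above is complete.
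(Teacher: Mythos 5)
Your proof is correct and follows essentially the same route as the paper: existence via $u = \CN_{F_\alpha}^\infty(T)$ with the telescoping product of ratios $h_i \in 1 + \m^i\PS{T}$, and uniqueness via multiplicativity of $\CN_{F_\alpha}^\infty$ together with \eqref{kernel of CN^infty}. The only cosmetic difference is that the paper writes a single solution as $u = T\tilde{u}$ and shows $u = \CN_{F_\alpha}^\infty(T)$ directly, whereas you compare the ratio $u_1/u_2$ of two solutions; these are the same argument.
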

    \begin{proof} Existence: By Lemma \ref{Properties of CN}(b), let $h_i := \CN_{F_\alpha}^i(T)/\CN_{F_\alpha}^{i-1}(T) \in 1 + \m^i\PS{T}$. Then we have $\CN_{F_\alpha}^\infty(T) = Th_1h_2\cdots$. It is easy to see that $h_1h_2\cdots \in 1 + \m\PS{T}$, so $\CN_{F_\alpha}^\infty(T) \in T + T\m\PS{T}$. Therefore, $u = \CN_{F_\alpha}^\infty(T)$ satisfies the condition. \par 
        Uniqueness: If $\CN_{F_\alpha}(u) = u$, then $\CN_{F_\alpha}^i(u) = u$ for each $i$. Thus, $\CN_{F_\alpha}^\infty (u) = u$ after taking the limit. Since $u \in T + T\m\PS{T}$, there is $\tilde{u} \in 1 + \m\PS{T}$ such that $u = T\tilde{u}$. Then, in view of \eqref{kernel of CN^infty}, 
        \begin{equation*}
            u = \CN_{F_\alpha}^\infty(u) = \CN_{F_\alpha}^\infty(T) \CN_{F_\alpha}^\infty(\tilde{u}) = \CN_{F_\alpha}^\infty(T)
        \end{equation*}
    \end{proof}
    \begin{remark}
        We can interpret the equality $\CN_{F_\alpha}(u) = u$ as a condition of norm coherence in the context of \cite{Col79}. To be precise, suppose $\Lambda_{\alpha,n}$ is the set of roots of $[p^n]_{F_\alpha}$ in the fixed algebraic closure of $K$. Let $K_{\pi,n} := K(\Lambda_{\alpha,n})$ and $N_{n+1,n} := N_{K_{\pi,n+1}/K_{\pi,n}}$ be the norm map. It can be shown that $\Lambda_{\alpha,n} \cong \O_K/\m^n$ \cite[I, 3.4]{Mil20}. Suppose $v_n$ is a generator of $\Lambda_{\alpha,n}$ as an $\O_K$-module for each $n$ such that $[p]_{F_\alpha}(v_{n+1}) = v_n$. We then have 
        \begin{equation*}
            \CN_{F_\alpha}(u)(v_n) = N_{n+1,n}\bigl(u(v_{n+1})\bigr)
        \end{equation*}
        by \cite[Corollary 12(ii)]{Col79}. Thus, $\CN_{F_\alpha}(u) = u$ is equivalent to saying that 
        \begin{equation*}
            u(v_n) = N_{n+1,n}\bigl(u(v_{n+1})\bigr)
        \end{equation*}
        i.e., $u$ transforms the sequence $\{v_n\}$ to a sequence compatible with the norm maps.  
        \begin{remark}
            Let $\mathscr{M}_{\infty,\alpha} = \{g \in \O_K\LS{T}^* \colon \CN_{F_\alpha}(g) = g\}$ be the subset in $\O_K\LS{T}^*$ consisting of norm-coherent series in the sense above. Then the uniqueness of $u$ follows from an exact sequence of groups. 
            \begin{equation*}
                1 \to 1 + \m\PS{T} \to \O_K\LS{T}^{\times} \stackrel{\CN_{F_\alpha}^\infty}{\longrightarrow} \mathscr{M}_{\infty,\alpha} \to 1
            \end{equation*}
            as in \cite[Proposition 14]{Col79}. 
        \end{remark}
    \end{remark}

    \section{Norm coherence condition} \label{norm-coherent conditions}
    In a more general case of Theorem \ref{Special Ando}, the left-hand side of \eqref{Ando's criterion} should not simply be $[p]_{F_\alpha}(T)$. Indeed, it should be a canonical lift of the relative Frobenius map. To define this lift, we have to first recall some notions in formal groups and deformations of formal group laws. \par 
    Suppose $\mathcal{F}$ is a formal group over $R$ and $\mathcal{G}$ is a formal group over $k$ of finite height $n$. Let $F$ and $G$ be the respective formal group laws associated to chosen coordinates of these formal groups. Let $X$ be a chosen coordinate on $\mathcal{F}$. 
    \begin{definition}[Quotients of formal groups] \label{def of quotients of fg}
        Suppose $\mathcal{D}$ is a subgroup of $\mathcal{F}$ of degree $p^r$ defined over a complete local ring $S$ containing $R$. Then we define the \textbf{\em{quotient group}} $\mathcal{F}/\mathcal{D}$ over $S$ as follows. Let $m \colon \mathcal{F} \times \mathcal{D} \to \mathcal{F}$ be the multiplication map and $pr_1 \colon \mathcal{F} \times \mathcal{D} \to \mathcal{F}$ be the projection onto the first factor. The coordinate ring of $\mathcal{F}/\mathcal{D}$ is defined by the equalizer 
        \begin{center}
            \begin{tikzcd}
                {\O_{\mathcal{F}/\mathcal{D}}} & {\O_{\mathcal{F}}} & {\O_{\mathcal{F} \times \mathcal{D}}}
                \arrow["{pr_1^*}"', shift right=1, from=1-2, to=1-3]
                \arrow["{m^*}", shift left=1, from=1-2, to=1-3]
                \arrow["{f_\mathcal{D}^*}", from=1-1, to=1-2]
            \end{tikzcd}
        \end{center}
        It can be shown that $\mathcal{F}/\mathcal{D}$ is a formal group over $S$. In addition, viewing $\O_\mathcal{F}$ as an $\O_{\mathcal{F}/\mathcal{D}}$-module through $f_\mathcal{D}^*$, we obtain $X_\mathcal{D} = \mbox{Norm}_{f_\mathcal{D}^*}(X)$ as a coordinate of $\mathcal{F}/\mathcal{D}$ \cite[Theorem 19]{Str97}. 
    \end{definition}
    If $\mathcal{D}(S)$ contains exactly $p^r$ elements, then 
    \begin{equation} \label{f_D(X_D) = prod}
        f_\mathcal{D}^*(X_\mathcal{D}) = \prod_{P \in \mathcal{D}(S)}\bigl(X +_F X(P)\bigr)
    \end{equation}
    by direct calculation. Thus, reducing \eqref{f_D(X_D) = prod} to the residue field of $R$ we have 
    \begin{equation} \label{f restricts to Frob} 
        f_\mathcal{D}^*(X_\mathcal{D}) \equiv X^{p^r} \mod{\m}
    \end{equation}
    \textbf{Notation. }For simplicity, from now on, we will not distinguish an isogeny between formal groups and the power series to which it corresponds as a map between coordinate rings. For instance, we will simply write
    \begin{equation*}
        f_\mathcal{D}(T) = \prod_{P \in \mathcal{D}(S)}\bigl(T +_F X(P)\bigr)
    \end{equation*}
    as an isogeny between formal group laws. \par 
    The following definition generalizes Definition \ref{naive star-iso}. 
    \begin{definition}[Deformations of formal group laws and $\star$-isomorphisms]
        Let $\pi \colon R \to R/\m$ be the natural projection. A \textbf{\em{deformation of $G$ to $R$}} is a triple $(F,i,\eta)$, where $F$ is a formal group law over $R$, $i \colon k \to R/\m$ is an inclusion and $\eta \colon \pi^*(F) \to i^*(G)$ is an isomorphism. Here $\pi$ and $i$ act on each coefficient. \par 
        Suppose $(F,i,\eta)$ and $(F^\prime,i^\prime,\eta^\prime)$ are two deformations of $G$ to $R$ such that $i = i^\prime$. Then we say $(F,i,\eta)$ and $(F^\prime,i^\prime,\eta^\prime)$ are \textbf{\em{$\star$-isomorphic}} if there is an isomorphism $\psi \colon F \to F^\prime$ of formal group laws such that $\eta^\prime \circ \pi^*(\psi) = \eta$. \par 
        Furthermore, if $\eta = \eta^\prime$ and $\pi^*(\psi) = \rm{id}$, we call $\psi \colon F \to F^\prime$ a \textbf{\em{$\star$-isomorphism}} as well. 
    \end{definition}
    \begin{definition}[Deformations of Frobenius]
        Suppose $\Phi$ is the relative Frobenius map on $G$ over $k$ and $\sigma$ is the absolute Frobenius map over $k$. Suppose $(F,i,\eta)$ and $(F^\prime,i^\prime,\eta^\prime)$ are two deformations of $G$ to $R$. An isogeny $\psi \colon F \to F^\prime$ of degree $p^r$ is a \textbf{\em{deformation of Frobenius}} if $i^\prime = i \circ \sigma^r$, and $\eta^\prime \circ \pi^*(\psi) = i^*(\Phi^r) \circ \eta$, i.e., the following diagram commutes. 
        \begin{center}
            \begin{tikzcd}
                {\mathcal{F}} & {\pi^*(\mathcal{F})} & {i^*(\mathcal{G})} \\
                && {i^*(\mathcal{G}^{(p^r)})} \\
                {\mathcal{F^\prime}} & {\pi^*(\mathcal{F^\prime})} & {{i^\prime}^*(\mathcal{G})}
                \arrow[from=1-2, to=1-1]
                \arrow["\psi"', from=1-1, to=3-1]
                \arrow["{\pi^*(\psi)}"', from=1-2, to=3-2]
                \arrow[from=3-2, to=3-1]
                \arrow["{i^*(\Phi^r)}", from=1-3, to=2-3]
                \arrow["\eta", from=1-2, to=1-3]
                \arrow["{\eta^\prime}"', from=3-2, to=3-3]
                \arrow[Rightarrow, no head, from=2-3, to=3-3]
            \end{tikzcd}
        \end{center} \par 
        Two deformations of Frobenius $(F_1,i_1,\eta_1) \to (F_1^\prime,i_1^\prime,\eta_1^\prime)$ and $(F_2,i_2,\eta_2) \to (F_2^\prime,i_2^\prime,\eta_2^\prime)$ are said to be \textbf{\em{isomorphic}} if $(F_1,i_1,\eta_1), (F_2,i_2,\eta_2)$ are $\star$-isomorphic and $(F_1^\prime,i_1^\prime,\eta_1^\prime), (F_2^\prime,i_2^\prime,\eta_2^\prime)$ are $\star$-isomorphic. 
    \end{definition}
    The following theorem classifies deformations of Frobenius. 
    \begin{theorem} [{{cf. \cite[Theorem 42]{Str97}}}] \label{classification of def of Frob}
        There is a universal deformation $(F_{\rm univ},{\rm id},{\rm id})$ of $G$ to the Lubin--Tate ring $W(k)\PS{u_1,\cdots,u_{n-1}}$, in the following sense. For each $r \geqslant 0$, there is a complete local ring $A^r$ such that 
        \begin{equation*}
            \{\mbox{deformations } (F,i,\eta) \to (F^\prime,i^\prime,\eta^\prime) \mbox{ of } \Phi^r \mbox{ to } T\}/\mbox{isomorphisms} \cong \Hom(A^r,T)
        \end{equation*}
        Moreover, we have $A^0 = W(k)\PS{u_1,\cdots,u_{n-1}}$ and $A^r$ is a bimodule over $A^0$ with structure maps $s^r,t^r \colon A^0 \to A^r$, which are local homomorphisms. \par 
        The isomorphism is given as follows: for any deformation $(F,i,\eta) \to (F^\prime,i^\prime,\eta^\prime)$ of \ $\Phi^r$ to $T$, there is a unique local homomorphism $\rho^r \colon A^r \to T$ such that $\rho^r \circ s^r$ and $\rho^r \circ t^r$ restrict to $i$ and $i^\prime$ on the residue fields respectively and that there are unique $\star$-isomorphisms $(F,i,\eta) \to ({\rho^r}^*{s^r}^*F_{\rm univ},i,\rm{id})$ and $(F^\prime,i^\prime,\eta^\prime) \to ({\rho^r}^*{t^r}^*F_{\rm univ}, i^\prime, {\rm id})$.
    \end{theorem}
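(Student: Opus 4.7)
The plan is to split the theorem into two representability statements. For $r = 0$, a deformation of $\Phi^0 = {\rm id}$ is nothing more than a deformation of $G$, so the claim reduces to the classical Lubin--Tate theorem: the functor sending a complete local ring $T$ (with residue field an extension of $k$) to the set of $\star$-isomorphism classes of deformations of $G$ to $T$ is represented by $A^0 = W(k)\PS{u_1,\ldots,u_{n-1}}$, with $F_{\rm univ}$ as the universal deformation. This gives $s^0 = t^0 = {\rm id}$ and the desired bijection in degree zero.

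For $r \geqslant 1$, I would exploit the correspondence between deformations of Frobenius and finite subgroup schemes. A deformation of Frobenius $\psi \colon (F,i,\eta) \to (F',i',\eta')$ of degree $p^r$ has a kernel $\mathcal{D} \subset \mathcal{F}$ which is a subgroup of degree $p^r$; moreover, by \eqref{f restricts to Frob} and the fact that $\ker \Phi^r$ is the unique degree-$p^r$ subgroup of $\mathcal{G}$ (since $\mathcal{G}$ has height $n$), $\mathcal{D}$ automatically reduces to $\ker \Phi^r$ modulo $\m$. Conversely, given a pair $(\mathcal{F},\mathcal{D})$ of a deformation with a degree-$p^r$ subgroup, Definition \ref{def of quotients of fg} produces a quotient deformation $\mathcal{F}/\mathcal{D}$ and a canonical quotient isogeny $f_\mathcal{D} \colon \mathcal{F} \to \mathcal{F}/\mathcal{D}$, and this isogeny is a deformation of $\Phi^r$ in the sense of the preceding definition. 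Hence, modulo the isomorphism relation on deformations of Frobenius, the data of such a deformation is the same as the data of a pair $(\mathcal{F},\mathcal{D})$.

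Third, I would invoke Strickland's representability result that the functor on complete local $A^0$-algebras sending $T$ to the set of degree-$p^r$ subgroups of the pulled-back universal deformation ${s^0}^* F_{\rm univ}$ is corepresented by a complete local ring $A^r$. The source map $s^r \colon A^0 \to A^r$ records the ambient deformation $\mathcal{F}$, while the target map $t^r \colon A^0 \to A^r$ is obtained by applying the universality of $A^0$ to the quotient deformation $\mathcal{F}/\mathcal{D}$; that both are local homomorphisms follows because on residue fields they induce $i$ and $i' = i \circ \sigma^r$ respectively. The uniqueness of $\rho^r$ and of the two $\star$-isomorphisms to the universal deformation are then immediate from the universal properties of $A^0$ (applied separately to source and target) and of $A^r$ (applied to the subgroup $\mathcal{D}$).

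The main obstacle is the representability of the degree-$p^r$ subgroup functor on a height-$n$ formal group, which is the substantive content of \cite[Theorem 42]{Str97}. Strickland's construction builds $A^r$ as an explicit quotient of a polynomial ring parametrising the coefficients of a monic ``equation of the subgroup'' of degree $p^r$, and verifying that this ring really corepresents subgroups involves careful compatibility checks with the norm construction underlying Definition \ref{def of quotients of fg}. Rather than reproduce that construction, I would cite \cite[Theorem 42]{Str97} and focus the exposition on the bijection above between deformations of $\Phi^r$ and pairs (deformation, degree-$p^r$ subgroup).
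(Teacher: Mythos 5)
Your proposal is sound, but it should be noted that the paper itself offers no proof for this theorem: it is stated with the reference ``cf.\ \cite[Theorem 42]{Str97}'' and no argument is given, so there is no internal proof to compare against. What you have written is a reasonable reconstruction of what that citation is standing in for, and in fact supplies more detail than the paper. Your decomposition is the right one: degree $r=0$ is the classical Lubin--Tate representability giving $A^0 = W(k)\PS{u_1,\ldots,u_{n-1}}$, and for $r \geqslant 1$ the content is (i) the dictionary between deformations of $\Phi^r$ and pairs (deformation, degree-$p^r$ subgroup) via kernel and quotient isogeny, and (ii) Strickland's theorem that the subgroup functor is corepresentable. Your identification of $s^r$ as recording the source deformation and $t^r$ as recording the quotient deformation via $A^0$-universality, with residue field maps $i$ and $i \circ \sigma^r$, is exactly the intended bimodule structure.

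One small inaccuracy: you justify that the kernel $\mathcal{D}$ of a deformation of Frobenius $\psi$ reduces to $\ker \Phi^r$ mod $\m$ by invoking \eqref{f restricts to Frob}, but that equation describes the \emph{quotient} isogeny $f_{\mathcal{D}}$ built from a given subgroup, not an arbitrary $\psi$. The correct justification is more direct: by the definition of a deformation of Frobenius, $\eta^\prime \circ \pi^*(\psi) = i^*(\Phi^r) \circ \eta$, so the reduction of $\psi$ to the residue field is $\Phi^r$ up to the isomorphisms $\eta, \eta^\prime$, whence $\ker \psi$ reduces to $\ker \Phi^r$; uniqueness of the degree-$p^r$ subgroup of a one-dimensional formal group of finite height over a field then pins this down, as you note. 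Also, when you say the subgroup functor lives on ``complete local $A^0$-algebras,'' you should make explicit that a deformation of $\Phi^r$ to a general complete local ring $T$ first produces the $A^0$-algebra structure on $T$ via the source deformation, after which the subgroup data promotes it to a map $A^r \to T$; this is needed to get the stated bijection with $\Hom(A^r, T)$ over plain complete local rings rather than $A^0$-algebras.
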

    \begin{remark} \label{rmk: eta to id}
        According to \cite[Remark 6.2]{Zhu20}, given any deformation $(F,i,\eta)$ of $G$ to $R$, there exists a unique deformation $(\tilde{F},i,\rm{id})$ given by \cite[Theorem 3.1]{LT66} such that the two deformations are $\star$-isomorphic. Thus, we will assume $\eta = \rm{id}$ in the following.
   \end{remark} 
    Suppose from now on that $(F,i,\rm{id})$ is a deformation of $G$ to $R$, where $R$ is a complete local domain with $p \neq 0$. \par 
    Since $[p]_F \equiv [p]_G \mod{\m}$ and $G$ has height $n < \infty$, not all coefficients of $[p]_F$ are in $\m$. By the Weierstrass preparation theorem, there is a unit $v$ in $R\PS{T}$ and a monic polynomial $\beta$ of degree $p^n$, such that $[p]_F = v \cdot \beta$. Note that roots of $[p]_F$ are the same as the roots of $\beta$. Let
    \begin{equation} \label{def of lambda_p,1}
        \Lambda_{[p]_F,1} := \{\mbox{roots of $[p]_F$ in a larger ring $S$ obtained from $R$ by adjoining roots of $\beta$}\}
    \end{equation}
    Since $p \neq 0$ in $R$, $0$ is a simple root of $[p]_F$. For any $\lambda \in \Lambda_{[p]_F,1}$, $[p]_F\bigl(T -_F \lambda\bigr) = [p]_F(T)$. Therefore, $\lambda$ is also a simple root of $[p]_F$. We conclude that the set $\Lambda_{[p]_F,1}$ has exactly $p^n$ elements. 
   
    Let $\mathcal{D} := \mathcal{F}[p]$ be the subgroup of $\mathcal{F}$ of $p$-torsions defined over $S$. Then the quotient isogeny $f_p \colon \mathcal{F} \to \mathcal{F}/\mathcal{F}[p]$ is given by 
    \begin{equation*}
        f_p(T) = \prod_{\lambda \in \Lambda_{[p]_F,1}}\bigl(T +_F \lambda\bigr)
    \end{equation*}
    Note that $f_p(T)$ is invariant under the action of $\mbox{Aut}(S/R)$, so $f_p(T) \in R\PS{T}$. As a consequence, $\mathcal{F}/\mathcal{F}[p]$ can be defined over $R$. Thus, $f_p$ is a deformation of Frobenius between $(F,i,\rm{id})$ and $(F/F[p],i \circ \sigma^n,\rm{id})$ in view of \eqref{f restricts to Frob}. By Theorem \ref{classification of def of Frob}, there exists a unique local homomorphism $\rho^n \colon A^n \to R$ with a unique $\star$-isomorphism $g_p \colon F/F[p] \to {\rho^n}^*{t^n}^*F_{\rm univ}$. 
    \begin{definition}
        Define 
        \begin{equation*}
            l_p := g_p \circ f_p \colon F \to {\rho^n}^*{t^n}^*F_{\rm univ}
        \end{equation*}
    \end{definition}
    \begin{remark} \label{Classification of l_p}
        According to \cite[Remark 6.7]{Zhu20}, the isogeny $l_p$ is an isogeny of formal group laws over $R$ characterized by the following properties. 
        \begin{itemize}
            \item [(a)] It is an isogeny from $F$ to ${\rho^n}^*{t^n}^*F_{\rm univ}$. 
            \item [(b)] The kernel of $l_p$ is the same as that of $[p]_F$. 
            \item [(c)] The reduction of $l_p$ to the residue field is the $p^n$-power relative Frobenius $T^{p^n}$, i.e., $l_p(T) \equiv T^{p^n} \mod{\m}$. 
        \end{itemize}
    \end{remark}
    To proceed to the general case, we formulate the norm coherence condition of Theorem \ref{Not precise goal} precisely as follows. 
    \begin{theorem} \label{Goal}
        Suppose $R$ is a complete local domain with $p \neq 0$ and residue field containing $k$. In each $\star$-isomorphism class of deformations of $G$ to $R$, there is a unique element $(F,i,{\rm id})$ such that 
        \begin{equation} \label{Generalized Ando's criterion}
            l_p(T) = f_p(T) = \prod_{\lambda \in \Lambda_{[p]_F,1}}\bigl(T +_F \lambda\bigr)
        \end{equation}
        i.e., $g_p(T) = T$. 
    \end{theorem}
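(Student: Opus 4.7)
The plan is to adapt the Coleman-norm argument of Section \ref{Special Proof} to this more general setting by constructing a generalized norm operator $\CN_F$ attached to an arbitrary deformation $F$ of $G$ over the complete local domain $R$. The structural role that $[p]_{F_\alpha}$ played in Section \ref{Section 2}---that of an isogeny out of $F$, of degree $p^n$, with kernel $F[p]$, and reducing modulo $\m$ to the $p^n$-power Frobenius---is now played by $l_p^F$ of Remark \ref{Classification of l_p} and the quotient isogeny $f_p^F(T) = \prod_{\lambda \in \Lambda_{[p]_F, 1}}(T +_F \lambda)$ of Definition \ref{def of quotients of fg}, both of which satisfy the analogous properties via \eqref{f restricts to Frob}.

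First I would construct $\CN_F \colon R\LS{T} \to R\LS{T}$ (with $R\LS{T}$ equipped with an analogue of the ``compact--open'' topology of Section \ref{Section 2}), characterized by a formula in the spirit of Theorem \ref{Coleman Norm Operator}, and verify that it is continuous and multiplicative. Well-definedness decomposes into three checks: $F[p]$-invariance of the symmetric product on the right-hand side, so that it factors through $f_p^F$ (and hence through $l_p^F = g_p^F \circ f_p^F$); $\mathrm{Aut}(S/R)$-invariance, so that the factorization descends from $S\LS{T}$ to $R\LS{T}$; and injectivity of precomposition with the relevant isogeny, coming from its positive reduction order. I would then establish the filtration analogues of Lemma \ref{Properties of CN}(a)(b), namely $\CN_F(1 + \m^i\PS{T}) \subseteq 1 + \m^{i+1}\PS{T}$ and the corresponding estimate on iterates. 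This step is the main technical obstacle: since the maximal ideal $\m$ of a complete local domain is in general non-principal, Coleman's original argument, which relies on powers of a uniformizer of a discrete valuation ring, must be reformulated using ideal-theoretic filtration estimates, with the congruence that the relevant isogeny reduces to $T^{p^n}$ modulo $\m$ as the crucial input. Granting these estimates, $\CN_F^\infty(h) := \lim_{i \to \infty}\CN_F^i(h)$ is a well-defined, $\CN_F$-invariant, continuous multiplicative operator, and $\CN_F^\infty$ annihilates $1 + \m\PS{T}$.

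With $\CN_F$ in hand, the rest of the argument parallels Section \ref{Special Proof}. For $u \in T + T\m\PS{T}$, set $F_u := u \circ F \circ u^{-1}$, a $\star$-isomorphic deformation. The uniqueness clause in Theorem \ref{classification of def of Frob} yields the compatibility $l_p^{F_u} \circ u = l_p^F$, and a substitution identical in spirit to the computation following \eqref{Ando's condition in term of CN before canceling [p]_F} rewrites the desired norm-coherence condition $g_p^{F_u}(T) = T$ for $F_u$ (equivalently $l_p^{F_u} = f_p^{F_u}$) as a fixed-point equation of the form $\CN_F(u) = u$ on $T + T\m\PS{T}$. Existence and uniqueness of such $u$ then follow verbatim from the final proposition of Section \ref{Special Proof}: $u := \CN_F^\infty(T)$ lies in $T + T\m\PS{T}$ and is a fixed point by continuity and the defining property of $\CN_F^\infty$; any fixed point $u = T\tilde{u}$ with $\tilde{u} \in 1 + \m\PS{T}$ satisfies, by multiplicativity of $\CN_F^\infty$ and \eqref{kernel of CN^infty}, the chain $u = \CN_F^\infty(u) = \CN_F^\infty(T)\cdot\CN_F^\infty(\tilde{u}) = \CN_F^\infty(T)$. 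Together these reduce the theorem to the clean package of properties of $\CN_F$ established in the first step.
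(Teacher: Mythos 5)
Your proposal follows the paper's architecture faithfully: generalize the Coleman norm operator to $\CN_F$ via $l_p$, establish right-cancellability, multiplicativity, continuity, and the $\m$-adic filtration estimates (Lemmas~\ref{l_p can be cancaled from right}, \ref{Lemma for CN}, \ref{generalized properties of CN}, Theorem~\ref{generalized CN}), then import the fixed-point argument from Section~\ref{Special Proof} wholesale. That is exactly the shape of the paper's argument.

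However, the step where you pass from the compatibility of $l_p$ under a $\star$-isomorphism to the fixed-point equation is where both your proposal and the paper's ``mutatis mutandis'' are too fast, and as written your version has an internal inconsistency. You correctly observe, using the uniqueness in Theorem~\ref{classification of def of Frob} and the characterization in Remark~\ref{Classification of l_p}, that $F$ and $F_u := u\circ F\circ u^{-1}$ yield the same classifying map $\rho^n$, hence the same target ${\rho^n}^*{t^n}^*F_{\rm univ}$, which gives $l_p^{F_u}\circ u = l_p^F$, i.e.\ $l_p^{F_u} = l_p^F\circ u^{-1}$. But then the substitution does \emph{not} produce $\CN_F(u)=u$. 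Indeed, the defining equation gives
\begin{equation*}
\CN_{F_u}(T)\circ l_p^{F_u}(T) = f_p^{F_u}(T) = \prod_{\lambda\in\Lambda_{[p]_F,1}} u\bigl(u^{-1}(T)+_F\lambda\bigr) = \CN_F(u)\circ l_p^F\circ u^{-1}(T),
\end{equation*}
and since $l_p^{F_u} = l_p^F\circ u^{-1}$, right-cancellation yields $\CN_{F_u}(T) = \CN_F(u)$, so that the norm-coherence condition $\CN_{F_u}(T)=T$ for $F_u$ becomes $\CN_F(u)=T$, not $\CN_F(u)=u$. The fixed-point form $\CN_F(u)=u$ would instead result from the \emph{conjugation} relation $l_p^{F_u} = u\circ l_p^F\circ u^{-1}$, which is what holds for $[p]_F$ in Section~\ref{Special Proof} (since $[p]$ is intrinsic to $F$), but is incompatible with the one-sided relation you stated, because the target of $l_p^{F_u}$ is the fixed formal group law ${\rho^n}^*{t^n}^*F_{\rm univ}$ rather than its $u$-conjugate. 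This matters: the existence argument $u := \CN_F^\infty(T)$ produces a fixed point ($\CN_F(u)=u$), and the uniqueness argument via $\CN_F^\infty$ and \eqref{kernel of CN^infty} is specific to the fixed-point equation; neither transfers directly to $\CN_F(u)=T$. So either the compatibility relation you invoke needs to be revisited (perhaps tracking the $\eta'$-data on $F/F[p]$ more carefully so that the conjugation form is what actually holds), or the reduction to a fixed-point problem needs a different route. As it stands there is a genuine gap at this step that the one-line ``mutatis mutandis'' in your write-up does not close.
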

    \begin{remark}
        According to the proof for \cite[Proposition 7.1]{Zhu20}, the coordinate given by Theorem \ref{Goal} is norm-coherent with respect to any finite subgroup $D \subset \mathcal{F}$ in the sense of \cite[Definition 6.21]{Zhu20}. 
    \end{remark}
    \begin{remark}
        Back to the topological side, an orientation on the Morava E-theory $E_n$ corresponds to a coordinate on the formal group $\Spf\bigl(\pi_0\bigl(E_n^{\CP_+^\infty}\bigr)\bigr)$ \cite[Example 2.53]{AHS01}, so it induces a deformation $(F,i,\eta)$ of $G$ to $\pi_0\bigl(E_n^{\CP_+^\infty}\bigr)$. A change of orientation on $E_n$ will induce a $\star$-isomorphism between the induced deformations \cite[Example 4.9]{Zhu20}. One should be aware that the $\star$-isomorphism here may change the $\eta$-component. Recall that we set $\eta = \rm{id}$ in Remark \ref{rmk: eta to id}. Thus, Theorem \ref{Goal} implies that there is a unique $H_\infty$-orientation $\MU\langle 0 \rangle \to E_n$ such that the induced deformation has $\eta = \rm{id}$ and satisfies $\eqref{Generalized Ando's criterion}$. \par 
        In general, we say a deformation $(F,i,\eta)$ (also the orientation inducing this deformation) is \textbf{\em norm-coherent} if the component $\tilde{F}$ in $(\tilde{F},i,{\rm id})$ is norm-coherent in the sense of \eqref{Generalized Ando's criterion}, where the latter deformation is given by Remark \ref{rmk: eta to id} (cf. \cite[Definition 6.21]{Zhu20}). Therefore, there is only a family of norm-coherent orientations on $E_n$, which are $\star$-isomorphic to the deformation $(F,i,{\rm id})$ given by Theorem \ref{Goal}. 
    \end{remark}

    \section{Generalization of the norm operators}
    In this section, we aim to prove Theorem \ref{Goal} (and hence Theorem \ref{Not precise goal}) following the proof of the special case in Section \ref{Special Proof}. Observe that our earlier proof actually only requires $\O_K$ to be a complete local domain such that $[p]_{F_\alpha}$ is right-cancellative and the definition and properties of the Coleman norm operator over $\O_K$ in Section \ref{Section 2}. Therefore, we need only show that $l_p$ is right-cancellative and generalize Theorem \ref{Coleman Norm Operator} and Lemma \ref{Properties of CN} to the case of a complete local domain $R$ such that $p \neq 0$ and residue field contains $k$ substituting $[p]_F$ by $l_p$. The proof then applies mutatis mutandis. 
    \begin{remark}
        In the general case, we do not require $G$ to be a Honda formal group law and $F$ to be a Lubin-Tate formal group law as in Section \ref{Special Proof}, since the property that $[p]_F \equiv T^{p^n} \mod{\m}$ has been replaced by Remark \ref{Classification of l_p}(c). 
    \end{remark}
    \begin{lemma} \label{l_p can be cancaled from right}
        The power series $l_p$ is right-cancellative, i.e., if there are $g,h \in R\PS{T}$ such that $g \circ l_p = h \circ l_p$, we have $g = h$. 
        \begin{proof}
            We may assume $h = 0$, so that $g\bigl(l_p(T)\bigr) = 0$. Since $R$ is complete with respect to $\m$, we need only prove by induction on $i$ that $g \equiv 0 \mod{\m^i}$ for each $i \geqslant 0$. The statement is vacuous for $i = 0$. Suppose we have proven that $g \equiv 0 \mod{\m^i}$. Since $l_p(T) \equiv T^{p^n} \mod{\m}$ by Remark \ref{Classification of l_p}(c), $g(T^{p^n}) \equiv 0 \mod{\m^{i+1}}$, and hence $g \equiv 0 \mod{\m^{i+1}}$. 
        \end{proof}
    \end{lemma}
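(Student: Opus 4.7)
My plan is to reduce to the case $h=0$ by subtraction, and then to use the $\m$-adic filtration on $R\PS{T}$ together with the key property from Remark \ref{Classification of l_p}(c) that $l_p(T) \equiv T^{p^n} \pmod{\m}$. Subtracting, the statement becomes: if $g \in R\PS{T}$ and $g \circ l_p = 0$, then $g = 0$. Since $R$ is a complete local (Noetherian) ring, $\bigcap_i \m^i = 0$ by Krull's intersection theorem, so it suffices to show by induction on $i \geqslant 0$ that $g \equiv 0 \pmod{\m^i}$, i.e.\ every coefficient of $g$ lies in $\m^i$.

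The base case $i=0$ is trivial. For the inductive step, suppose $g = \sum_{k \geqslant 0} a_k T^k$ with $a_k \in \m^i$ for all $k$. Write $l_p(T) = T^{p^n} + r(T)$ with $r(T) \in \m\PS{T}$, which is possible by Remark \ref{Classification of l_p}(c). Then, expanding,
\begin{equation*}
    g(l_p(T)) \;=\; \sum_{k \geqslant 0} a_k\bigl(T^{p^n} + r(T)\bigr)^k \;\equiv\; \sum_{k \geqslant 0} a_k T^{kp^n} \pmod{\m^{i+1}},
\end{equation*}
since every term other than $T^{kp^n}$ in the binomial expansion of $(T^{p^n}+r(T))^k$ involves a factor from $\m$, and $a_k \in \m^i$. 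The hypothesis $g \circ l_p = 0$ then forces $\sum_k a_k T^{kp^n} \equiv 0 \pmod{\m^{i+1}\PS{T}}$. Because the monomials $T^{kp^n}$ for distinct $k$ have distinct degrees, we conclude $a_k \in \m^{i+1}$ for every $k$, completing the induction.

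I do not anticipate a significant obstacle: the crucial input is the normal form $l_p(T) \equiv T^{p^n} \pmod{\m}$ from Remark \ref{Classification of l_p}(c), which makes the substitution $g \mapsto g \circ l_p$ preserve the leading $\m$-adic information of each coefficient. The only mild subtlety is observing that the ``cross terms'' arising from $r(T) \in \m\PS{T}$ get absorbed into $\m^{i+1}$ by the inductive hypothesis, and that the completeness of $R$ lets us pass from vanishing modulo every $\m^i$ to vanishing on the nose.
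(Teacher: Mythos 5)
Your proof is correct and follows essentially the same inductive argument as the paper, just with the expansion of $g \circ l_p$ modulo $\m^{i+1}$ spelled out explicitly rather than left implicit. The only cosmetic difference is that you cite Krull's intersection theorem for $\bigcap_i \m^i = 0$, whereas the paper appeals directly to completeness (which in the local-ring sense already includes separatedness); both are fine here.
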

    The following proofs essentially follow those in \cite{Col79}. 
    \begin{lemma}[{{cf. \cite[Lemma 3]{Col79}}}] \label{Lemma for CN}
        If $g \in R\PS{T}$ and $g(T +_F \lambda) = g(T)$ for all $\lambda \in \Lambda_{[p]_F,1}$, then there is a unique $h \in R\PS{T}$ such that $h \circ l_p = g$. 
        \begin{proof}
            The uniqueness follows from Lemma \ref{l_p can be cancaled from right}. \par 
            For the existence, we inductively construct formal power series $g_m$ for each $m \geqslant 0$. Let $g_0 = g$. Suppose that we have constructed $a_i \in R$ for $0 \leqslant i \leqslant m - 1$ such that 
            \begin{equation*}
                g - \sum_{i = 0}^{m - 1} a_il_p^i = l_p^m \cdot g_m
            \end{equation*}
            for some $g_m \in R\PS{T}$. Note that by assumption $g(T +_F \lambda) = g(T)$ and by Remark \ref{Classification of l_p}(b) $l_p(T +_F \lambda) = l_p(T)$. We thus have $g_m(T +_F \lambda) = g_m(T)$. In particular, $\bigl(g_m -g_m(0)\bigr)(\lambda) = 0$ for all $\lambda \in \Lambda_{[p]_F,1}$. By the Euclidean algorithm for power series, there exists elements $g_{m+1} \in R\PS{T}$ with $r_m \in R[T]$ such that $g_m - g_m(0) = l_p \cdot g_{m+1} + r_m$ and $\deg(r_m) < p^n$. Then $r_m$ vanishes on $\Lambda_{[p]_F,1}$. Since we have deduced in the paragraph after \eqref{def of lambda_p,1} that $\Lambda_{[p]_F,1}$ has $p^n$ elements, we have $r_m = 0$. Let $a_m = g_m(0)$. This finishes the inductive step. Thus, we obtain 
            \begin{equation*}
                g - \sum_{i = 0}^\infty a_il_p^i \in \bigcap_{i = 0}^\infty l_p^i R\PS{T} = 0
            \end{equation*}
            Then $h(T) := \sum_{i = 0}^\infty a_i T^i$ is the desired element. 
        \end{proof}
    \end{lemma}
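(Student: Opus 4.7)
The uniqueness of $h$ is immediate from Lemma \ref{l_p can be cancaled from right} applied to the difference $h_1 - h_2$, so all of the content lies in existence. My plan for existence is first to recall the conceptual picture and then to execute it by an iterative division scheme. Conceptually, the hypothesis $g(T +_F \lambda) = g(T)$ for every $\lambda \in \Lambda_{[p]_F,1}$ says that $g$ is invariant under translation by the $p$-torsion subgroup $\mathcal{F}[p]$, and hence, in the spirit of the equalizer description in Definition \ref{def of quotients of fg}, it ought to descend through $f_p \colon \mathcal{F} \to \mathcal{F}/\mathcal{F}[p]$. Since $l_p = g_p \circ f_p$ with $g_p$ a $\star$-isomorphism, descent through $f_p$ is the same as descent through $l_p$, which is exactly what the lemma asserts.

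To turn this into an effective construction, I would perform repeated Weierstrass/Euclidean division against $l_p$. The key structural facts are: (i) $l_p(T) \equiv T^{p^n} \pmod{\m}$ by Remark \ref{Classification of l_p}(c), so $l_p$ is distinguished of Weierstrass degree $p^n$ and division with a remainder of degree $< p^n$ is available in $R\PS{T}$; (ii) $l_p(T +_F \lambda) = l_p(T)$ by Remark \ref{Classification of l_p}(b); and (iii) the paragraph after \eqref{def of lambda_p,1} established that $\Lambda_{[p]_F,1}$ consists of $p^n$ distinct points inside a domain $S \supseteq R$. Set $g_0 := g$ and $a_0 := g_0(0)$; divide $g_0 - a_0 = l_p \cdot g_1 + r_0$ with $\deg r_0 < p^n$. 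Substituting $T = \lambda$ and using invariance yields $r_0(\lambda) = g_0(\lambda) - a_0 = g_0(0) - a_0 = 0$ for every $\lambda \in \Lambda_{[p]_F,1}$. Since $r_0 \in R[T] \subset S[T]$ has $p^n$ distinct roots in $S$ but degree $< p^n$, integrality of $S$ forces $r_0 = 0$. Because multiplication by the nonzero element $l_p$ is injective in the domain $R\PS{T}$, the identity $g_1(T +_F \lambda) l_p(T) = g_1(T) l_p(T)$ transfers the invariance to $g_1$. Iterating produces constants $a_0, a_1, \ldots \in R$ and power series $g_1, g_2, \ldots \in R\PS{T}$ with $g - \sum_{i=0}^{m-1} a_i l_p^i = l_p^m g_m$. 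Since $l_p(0) = 0$, each $l_p^m$ lies in $(T^m)R\PS{T}$, so the series $\sum_i a_i l_p^i$ converges in the $(T)$-adic topology, and $h(T) := \sum_i a_i T^i \in R\PS{T}$ satisfies $h \circ l_p = g$.

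The main obstacle is the vanishing of each remainder $r_m$. That step requires both integrality of $R$ (to pass to $S$ and run a Vandermonde/degree argument) and the hypothesis $p \neq 0$ in $R$, which is exactly what forced $|\Lambda_{[p]_F,1}| = p^n$ after \eqref{def of lambda_p,1}; without either ingredient one could not conclude that a polynomial of degree $< p^n$ vanishing on $\Lambda_{[p]_F,1}$ is zero. A secondary technical point is the legitimacy of the Weierstrass division against $l_p$ over a general complete local domain $R$, which should follow from the Weierstrass preparation theorem applied to the distinguished series $l_p$. Once these two points are settled, the iterative scheme and its $(T)$-adic convergence are straightforward, and the conceptual picture of descent through the quotient isogeny is what justifies believing in advance that the construction must succeed.
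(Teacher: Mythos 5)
Your proposal is correct and follows essentially the same route as the paper: iterated Weierstrass/Euclidean division by $l_p$, killing the remainder $r_m$ via the $p^n$ distinct roots in a domain $S$, and assembling $h(T) = \sum_i a_i T^i$ with convergence in the $(T)$-adic topology. The paper is slightly terser (it transfers the invariance to $g_m$ without spelling out the cancellation, and phrases convergence as $\bigcap_i l_p^i R\PS{T} = 0$), but these are the same observations; your added conceptual framing about descent through the quotient isogeny $f_p$ is a pleasant motivation rather than a different argument.
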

    Now we also endow $R\PS{T}$ with the compact--open topology similar to that on $\O_K\PS{T}$ as in Section \ref{Section 2}. Here $R$ has the $\m$-adic topology. 
    \begin{theorem} [{{cf. Theorem \ref{Coleman Norm Operator}}}] \label{generalized CN}
        There is a unique operator $\CN_F \colon R\PS{T} \to R\PS{T}$ such that for any $g \in R\PS{T}$, 
        \begin{equation*}
            \CN_F(g) \circ l_p (T) = \prod_{\lambda \in \Lambda_{[p]_F,1}}g(T +_F \lambda)
        \end{equation*}
        Moreover, $\CN_F$ is multiplicative and continuous. 
        \begin{proof}
            Note that the right-hand side of the above identity satisfies the condition of $\Lambda_{[p]_F,1}$-invariance from Lemma \ref{Lemma for CN}. Thus, there is a unique $\CN_F$ satisfying the formula. \par 
            Given any $g,h \in R\PS{T}$, 
            \begin{align*}
                \begin{split}
                    \CN_F(gh) \circ l_p(T) &= \prod_{\lambda \in \Lambda_{[p]_F,1}}gh(T +_F \lambda) \\ 
                    &= \bigl(\CN_F(g) \circ l_p(T)\bigr) \cdot \bigl(\CN_F(h) \circ l_p(T)\bigr) \\ 
                    &= \bigl(\CN_F(g) \cdot \CN_F(h)\bigr) \circ l_p(T) 
                \end{split}
            \end{align*}
            By Lemma \ref{l_p can be cancaled from right}, we then obtain $\CN_F(gh) = \CN_F(g) \cdot \CN_F(h)$. \par 
            Suppose $\{g_n\}$ converges to $g$. Then
            \begin{align*}
                \begin{split}
                    \bigl(\lim \CN_F(g_n)\bigr) \circ l_p(T) &= \lim \bigl(\CN_F(g_n) \circ l_p\bigr)(T) = \lim \prod_{\lambda \in \Lambda_{[p]_F,1}} g_n(T +_F \lambda) \\ 
                    &= \prod_{\lambda \in \Lambda_{[p]_F,1}} g(T +_F \lambda) = \CN_F(g) \circ l_p(T)
                \end{split}
            \end{align*}
            Again, it follows from Lemma \ref{l_p can be cancaled from right} that $\lim \CN_F(g_n) = \CN_F(g)$. 
        \end{proof}
    \end{theorem}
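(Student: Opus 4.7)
The plan is to reduce Theorem \ref{generalized CN} almost entirely to the two preparatory results immediately preceding it: Lemma \ref{Lemma for CN}, which allows division by $l_p$ for series satisfying the invariance condition with respect to $\Lambda_{[p]_F,1}$, and Lemma \ref{l_p can be cancaled from right}, which gives right-cancellation of $l_p$. Concretely, I would argue existence and uniqueness of $\CN_F(g)$ by setting $P_g(T) := \prod_{\lambda \in \Lambda_{[p]_F,1}} g(T +_F \lambda)$ and verifying its hypotheses: since $\Lambda_{[p]_F,1}$ is the kernel of $[p]_F$ in the formal group $\mathcal{F}$ and is therefore a subgroup under $+_F$, translation by any $\mu \in \Lambda_{[p]_F,1}$ simply permutes the indexing set, so $P_g(T +_F \mu) = P_g(T)$. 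Then Lemma \ref{Lemma for CN} supplies a unique $\CN_F(g) \in R\PS{T}$ with $\CN_F(g) \circ l_p = P_g$.

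Once the operator is defined, multiplicativity is essentially a bookkeeping exercise: compute $\CN_F(gh) \circ l_p$ directly from the defining formula and observe it factors as $(\CN_F(g) \cdot \CN_F(h)) \circ l_p$, then invoke Lemma \ref{l_p can be cancaled from right} to strip off $l_p$. For continuity, I would take $g_n \to g$ in the compact-open topology and first establish convergence of the finite products $\prod_{\lambda} g_n(T +_F \lambda)$ to $\prod_{\lambda} g(T +_F \lambda)$ — a routine consequence of the fact that a finite product of continuous operations (each $g \mapsto g(T +_F \lambda)$) is continuous. Equating these with $\CN_F(g_n) \circ l_p$ and $\CN_F(g) \circ l_p$ and applying Lemma \ref{l_p can be cancaled from right} once more yields $\CN_F(g_n) \to \CN_F(g)$.

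The main obstacle I anticipate is subtler than these formal manipulations: namely, verifying that $P_g$ actually lies in $R\PS{T}$ rather than merely in the larger ring $S\PS{T}$ where the roots $\lambda$ are defined. This is the precise analogue of the remark in Section \ref{norm-coherent conditions} that $f_p(T) \in R\PS{T}$ because it is invariant under $\Aut(S/R)$, and I would invoke the same $\Aut(S/R)$-invariance of $P_g$ — the automorphism group permutes $\Lambda_{[p]_F,1}$, hence fixes the product. A secondary concern is that the continuity argument requires the right-composition map $h \mapsto h \circ l_p$ to reflect convergence in the compact-open topology on $R\PS{T}$; this needs a careful check that the proof of Lemma \ref{l_p can be cancaled from right} can be upgraded to say cancellation is a closed operation, or equivalently that limits commute with cancellation by $l_p$ — both of which rest on the fact that $l_p(T) \equiv T^{p^n} \bmod{\m}$ from Remark \ref{Classification of l_p}(c).
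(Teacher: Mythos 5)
Your proposal is correct and takes essentially the same route as the paper: existence and uniqueness of $\CN_F(g)$ come from Lemma \ref{Lemma for CN} applied to the product $P_g$ (with invariance justified, as you do, by the fact that $\Lambda_{[p]_F,1}$ is the $p$-torsion subgroup so that $+_F\,\mu$ permutes the index set), while multiplicativity and continuity follow by manipulating the defining identity and cancelling $l_p$ via Lemma \ref{l_p can be cancaled from right}. The two ``obstacles'' you flag are in fact points the paper leaves implicit: that $P_g$ descends to $R\PS{T}$ is indeed an $\Aut(S/R)$-invariance argument identical to the one the paper gives for $f_p$, and the continuity step does need something slightly stronger than the purely algebraic Lemma \ref{l_p can be cancaled from right}, namely that composition with $l_p$ reflects $\m$-adic convergence; this is exactly the $\bmod\ \m^j$ induction on $h \mapsto h \circ l_p$ that appears inside the proof of Lemma \ref{generalized properties of CN}(a), using $l_p(T) \equiv T^{p^n} \bmod{\m}$, so your instinct about where the topological content lives is right.
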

    Note that in the proof in Section \ref{Special Proof} we only took limits of $\CN_{F_\alpha}$ applied to elements in $1 + \m\PS{T}$ and to $T$. Thus, it remains to show the following for $\CN_F$. 
    \begin{lemma} [{{cf. Lemma \ref{Properties of CN}}}] \label{generalized properties of CN}
        Let $g \in 1 + \m^i\PS{T}$ and $i \geqslant 1$. Then 
        \begin{enumerate}
            \item [(a)] $\CN_F(g) \in 1 + \m^{i+1}\PS{T}$ and 
            \item [(b)] $\CN_F^i(T)/\CN_F^{i-1}(T) \in 1 + \m^i\PS{T}$. 
        \end{enumerate}
        \begin{proof}
            \begin{enumerate}
                \item [(a)] By definition, $\CN_F(g) \circ l_p(T) = \prod_{\lambda \in \Lambda_{[p]_F,1}} g(T +_F \lambda)$. Suppose that $g(T) = 1 + \sum_{j=0}^\infty b_j T^j$, where $b_j \in \m^i$. Since $i \geqslant 1$, terms divided by $b_{j_1}b_{j_2}$ for some $j_1,j_2$ must lie in $\m^{i+1}$. Therefore, modulo $\m^{i+1}$, 
                \begin{align*}
                    \begin{split}
                        \CN_F(g) \circ l_p(T) &\equiv 1 + \sum_{\lambda \in \Lambda_{[p]_F,1}} \sum_{j=0}^\infty b_j(T +_F \lambda)^j \\ 
                        &= 1 + \sum_{j=0}^\infty\sum_{\lambda \in \Lambda_{[p]_F,1}} b_j(T +_F \lambda)^j \\ 
                        &= 1 + \sum_{j=0}^\infty b_j\bigl(p^nT^j + \sum_{k=0}^\infty p_k(\Lambda_{[p]_F,1})T^k\bigr) 
                    \end{split}
                \end{align*}
                where each $p_k(\Lambda_{[p]_F,1})$ is a symmetric function on $\lambda \in \Lambda_{[p]_F,1}$. Recall from Section \ref{norm-coherent conditions} that $\beta$ is a polynomial of degree $p^n$ dividing $[p]_F$ from the Weierstrass preparation theorem, and $\Lambda_{[p]_F,1}$ is the set of roots of $\beta$. Thus, $\beta(T) \equiv T^{p^n} \mod{\m}$. It follows that $p_k(\Lambda_{[p]_F,1}) \equiv 0 \mod{\m}$, since $p_k(\Lambda_{[p]_F,1})$ is a polynomial in the non-leading coefficients of $\beta$ (i.e., those elementary symmetric functions). Therefore, 
                \begin{equation*}
                    \CN_F(g) \circ l_p(T) \equiv 1 \mod{\m^{i+1}}
                \end{equation*}
                Next we prove by induction on $j$ that if $h \in R\PS{T}$ and $h \circ l_p \in \m^j\PS{T}$, then $h \in \m^j\PS{T}$ (here $j \geqslant 0$). Setting $h = \CN_F(g) - 1$ completes the proof of part (a). The case is vacuous when $j = 0$. Suppose $j \geqslant 1$ and the statement holds for $j - 1$. By the induction hypothesis, $h \in \m^{j-1}\PS{T}$. Since $l_p(T) \equiv T^{p^n} \mod{\m}$, we have 
                \begin{equation*}
                    h \circ l_p(T) \equiv h(T^{p^n}) \mod{\m^j}
                \end{equation*}
                Since $h \circ l_p \in \m^j\PS{T}$, we must have $h \in \m^j\PS{T}$. 
                \item [(b)] By part (a) and the multiplicativity of $\CN_F$ from Theorem \ref{generalized CN}, we need only show the case when $i = 1$. Since $l_p(T) \equiv T^{p^n} \mod{\m}$, 
                \begin{equation*}
                    \CN_F(T)(T^{p^n}) \equiv \CN_F(T) \circ l_p(T) = \prod_{\lambda \in \Lambda_{[p]_F,1}} (T +_F \lambda) \mod{\m}
                \end{equation*}
                By an argument with symmetric functions similar to that for part (a), we obtain that $\prod_{\lambda \in \Lambda_{[p]_F,1}} (T +_F \lambda) \equiv T^{p^n} \mod{\m}$. Hence, $\CN_F(T) \equiv T \mod{\m}$. Therefore, we have that $\CN_F(T)/T \equiv 1 \mod{T^{-1}\m\PS{T}}$. It remains to show that $T$ divides $\CN_F(T)$ in $R\PS{T}$, or $\CN_F(T)(0) = 0$. Since $0 \in \Lambda_{[p]_F,1}$, 
                \begin{equation*}
                    \CN_F(T)(0) = \CN_F(T) \circ l_p (0) = \prod_{\lambda \in \Lambda_{[p]_F,1}}\lambda = 0 
                \end{equation*}
            \end{enumerate}
        \end{proof}
    \end{lemma}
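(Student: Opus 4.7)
The plan is to mimic Coleman's argument \cite[Lemma 13]{Col79}, with $l_p$ playing the role that $[p]_{F_\alpha}$ played in the Lubin--Tate setting. The backbone is the defining relation $\CN_F(g) \circ l_p(T) = \prod_{\lambda \in \Lambda_{[p]_F,1}} g(T +_F \lambda)$ together with the key property $l_p(T) \equiv T^{p^n} \pmod{\m}$ from Remark \ref{Classification of l_p}(c). First I would establish the auxiliary fact that right composition with $l_p$ detects the $\m$-adic filtration: if $h \in R\PS{T}$ and $h \circ l_p \in \m^j\PS{T}$, then $h \in \m^j\PS{T}$. This is an induction on $j$ that reduces $h \circ l_p$ to $h(T^{p^n})$ modulo the next power of $\m$ using $l_p \equiv T^{p^n} \pmod{\m}$; Lemma \ref{l_p can be cancaled from right} is the $j=0$ prototype.

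For part (a), I would write $g = 1 + f$ with $f \in \m^i\PS{T}$ and expand $\prod_\lambda (1 + f(T +_F \lambda))$. Any cross term involves a product of at least two values of $f$, so it lies in $\m^{2i}\PS{T} \subseteq \m^{i+1}\PS{T}$ because $i \geqslant 1$, leaving $\CN_F(g) \circ l_p(T) \equiv 1 + \sum_\lambda f(T +_F \lambda) \pmod{\m^{i+1}}$. Writing $f(T) = \sum_k b_k T^k$ with $b_k \in \m^i$, the task becomes showing $\sum_\lambda (T +_F \lambda)^k \in \m\PS{T}$. The coefficients in $T$ of this sum are symmetric polynomials in $\Lambda_{[p]_F,1}$, hence polynomials in the non-leading coefficients of the Weierstrass polynomial $\beta$ cutting out the $\lambda$'s; since $\beta \equiv T^{p^n} \pmod{\m}$ those coefficients lie in $\m$, and the lone ``trivial'' $T^k$-contribution, equal to $p^n$, also lies in $\m$ because $p \in \m$. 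The cancellation lemma from the first paragraph then promotes the congruence on $\CN_F(g) \circ l_p(T)$ to $\CN_F(g) \in 1 + \m^{i+1}\PS{T}$.

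For part (b) I would induct on $i$. For $i = 1$, the same symmetric-function computation shows $\prod_\lambda (T +_F \lambda) \equiv T^{p^n} \pmod{\m}$, and combined with $l_p \equiv T^{p^n} \pmod{\m}$ this forces $\CN_F(T) \equiv T \pmod{\m}$; divisibility by $T$ is immediate from $\CN_F(T)(0) = \prod_\lambda \lambda = 0$, using $0 \in \Lambda_{[p]_F,1}$. For the inductive step, write $\CN_F^{i-1}(T) = \CN_F^{i-2}(T)(1 + h)$ with $h \in \m^{i-1}\PS{T}$ by hypothesis; multiplicativity of $\CN_F$ from Theorem \ref{generalized CN} yields $\CN_F^i(T)/\CN_F^{i-1}(T) = \CN_F(1 + h)$, and part (a) applied to $1 + h \in 1 + \m^{i-1}\PS{T}$ places this ratio in $1 + \m^i\PS{T}$.

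The main obstacle I expect is the symmetric-function computation in part (a): one needs to identify which symmetric polynomials in the roots appear after expanding $(T +_F \lambda)^k$ and summing over $\lambda$, and then leverage the fact that every $\lambda$ reduces to $0$ in the residue field (equivalently, that all non-leading coefficients of $\beta$ lie in $\m$). Once this ``mod-$\m$'' input is secured, the rest of the argument is essentially bookkeeping driven by the right-cancellation lemma for $l_p$ and the multiplicativity of $\CN_F$.
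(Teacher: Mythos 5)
Your proposal is correct and follows essentially the same route as the paper: both prove part (a) by expanding the product defining $\CN_F(g)\circ l_p$, isolating the cross terms in $\m^{i+1}$, handling the linear terms via symmetric functions of the roots of $\beta$ together with $p^n \in \m$, and then lifting the congruence on $\CN_F(g)\circ l_p$ back to $\CN_F(g)$ through the induction showing $h\circ l_p \in \m^j\PS{T}$ forces $h\in\m^j\PS{T}$; and both prove (b) by reducing to $i=1$ via multiplicativity and part (a), then handling the base case with the same symmetric-function computation and the observation $\CN_F(T)(0)=\prod_\lambda\lambda=0$. You merely spell out the inductive step for (b) — writing $\CN_F^{i-1}(T)=\CN_F^{i-2}(T)(1+h)$ and applying (a) to $1+h$ — which the paper leaves implicit behind the phrase ``we need only show the case $i=1$.''
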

    \begin{remark}
        Comparing \eqref{Ando's condition in term of CN before canceling [p]_F} from the proof in Section \ref{Special Proof}, we see that $f_p$ as defined in Section \ref{norm-coherent conditions} now factors as $\CN_F(T) \circ l_p$. Recall that $l_p = g_p \circ f_p$ by definition. By Lemma \ref{l_p can be cancaled from right}, $l_p$ is right-cancellative with respect to composition, and hence so does $f_p$. Therefore, $\CN_F(T)$ turns out to be the inverse to $g_p$, i.e., it is a (necessarily unique) $\star$-isomorphism ${\rho^n}^*{t^n}^*F_{\rm univ} \to F/F[p]$. This agrees with the construction of the coordinate associated with $F/F[p]$ in Definition \ref{def of quotients of fg}. \par 
        In view of this, the proof in Section \ref{Special Proof} is actually similar to the one for \cite[Theorem 2.6.4]{And95}. In \cite[Theorem 2.6.4]{And95}, Ando constructed a series of $\star$-isomorphisms to modify the coordinates such that $g_p(T)$ can approximate $T$ inductively modulo $\m^r$. On the other hand, our proof constructed a $\star$-isomorphism $u = \CN_F^\infty(T)$ directly so that $g_p^{-1}(T) = \CN_F(T)(T) = T$ after a change of coordinate by $u$. Indeed, properties of this limit of operators (i.e., Lemma \ref{Properties of CN} and Lemma \ref{generalized properties of CN}) are deduced inductively on $r$ modulo $\m^r$ similar to the one in Ando's construction. However, our proof cannot proceed by constructing a series of $\star$-isomorphisms, since $\CN_F^\infty(T)$ is not the infinite composite of $\CN_F(T)$. 
    \end{remark}
    \begin{remark}
        Walker has also observed the relationship between the Coleman norm operator and Ando's criterion \cite[Chapter 5]{Wal08}. In particular, he has deduced that Ando's criterion is equivalent to \eqref{Walker's conclusion} in \cite[Lemma 5.0.5 and (5.0.10)]{Wal08}. However, he did not prove Theorem \ref{Ando Theorem 4} via the Coleman norm operator. 
    \end{remark}
    \begin{remark}
        As mentioned in Remark \ref{Zhu and Ando}, Zhu generalized Theorem \ref{Goal} to apply to arbitrary complete local rings. To apply our proof to arbitrary complete local rings, we need to generalize the Coleman norm operator to such cases. However, recall in the argument around \eqref{def of lambda_p,1}, we need $R$ to be a domain so that we can count the number of roots of $\beta$, and we need $p \neq 0$ in $R$ so that $\beta$ is separable. Thus, there is a question whether the Coleman norm operator can be defined over arbitrary complete local rings. 
    \end{remark}

    \section*{Acknowledgements}
    The author is grateful to Charles Rezk for his insight of connecting Coleman's norm operator and Ando's theorem. Furthermore, the author would like to give sincere gratitude to Yifei Zhu for introducing him this idea. The author would also like to thank Eric Peterson and Robert Burklund for useful discussions and suggestions. The author thank Tongtong Liang for teaching me basic notions on power operations. Thanks also to the organizers and speakers of IWoAT (International Workshop on Algebraic Topology) 2022 for the wonderful lectures on chromatic homotopy theory. 
    \par 
    This work was partly supported by the National Natural Science Foundation of China grant 11701263. 

    \nocite{*}
    \Urlmuskip=0mu plus 1mu\relax
    \bibliographystyle{amsalpha}
    \bibliography{Ref-AC}
    \addcontentsline{toc}{section}{References}

    Email: \href{mailto:11911520@mail.sustech.edu.cn}{11911520@mail.sustech.edu.cn} \par 
    Webpage: \href{http://hongxiang-zhao.github.io}{\nolinkurl{hongxiang-zhao.github.io}}

\end{document}